\newcommand{\esssup}{\mathop{\rm{ess}\,\sup}}
\newcommand{\argmax}{\mathop{\arg\max}}
\newcommand{\argmin}{\mathop{\arg\min}}
\newtheorem{Ass}{Assumptions}
\newtheorem{Rem}{Remark}
\newtheorem{Pro}{Problem}[section]
\newtheorem{Def}{Definition}[section]
\newtheorem{Thm}{Theorem}[section]
\newtheorem{Lem}{Lemma}[section]
\newtheorem{Cor}{Corollary}[section]
\begin{document}
	\pagenumbering{gobble}
	\title{Finite Horizon Impulse Control of Stochastic Functional Differential Equations}
	\author{J. Jönsson and M. Perninge}
	\maketitle
\begin{abstract}
	In this work we show that one can solve a finite horizon non-Markovian impulse control problem with control dependant dynamics. This dynamic satisfies certain functional Lipschitz conditions and is path dependent in such a way that the resulting trajectory becomes a flow. 
\end{abstract}

\section{Introduction}
The finite horizon impulse control problem is a type of optimal stochastic control problem. Admissible controls consist of an increasing sequence of stopping times $\tau_{i}$ and a corresponding sequence of random variables $\xi_{i}$ representing  impulses affecting an underlying state. Apart from such a control, the evolution of this state is usually determined by a stochastic differential equation where the noise stems from a Brownian motion. The underlying state, together with the control, in turn affects a performance functional which is to be maximised.

This performance functional has a running part, depending continuously on the underlying state, as well as a negative impulse part rendering a "cost" depending on the choice of impulses that we allow to affect the state during its evolution.

More explicitly, given $\nu=(\tau_{i},\xi_{i})_{i\in \mathbb{N}}$, the underlying state follows the dynamic
\begin{equation}
	\begin{aligned}
		&dX^{\nu}_{t}=a(t,X^{\nu}_{t})dt+b(t,X^{\nu}_{t})dB_{t} \quad \tau_{i}< t < \tau_{i+1}\\
		&X^{\nu}_{\tau_{i}}=\Gamma(X^{\nu}_{\tau_{i}-},\xi_{i}),
	\end{aligned}
\end{equation}

where $B_{t}$ is a Brownian motion. The problem is to find $\nu$ such that

\begin{equation}
	\begin{aligned}
		{J}(\nu)=\mathbb{E}[\int_{0}^{T}f(s,X^{\nu}_{s})ds-\sum_{i} \ell(X^{\nu}_{\tau_{i}},\xi_{i},\tau_{i})]
	\end{aligned}
\end{equation}

is maximised, given a finite $T$.
\newpage
When faced with these types of problems there are a few natural questions that arise, e.g\\

\begin{itemize}
	\item Does an optimal control exist?\\
	
	\item Is it possible to explicitly compute this control for a given problem?
	
\end{itemize}
One of the most well known methods for handling these questions is dynamic programming. This method consists of several different parts, e.g Bellman's principle, Bellman's equation and the backward induction algorithm.

Bellman's principle could be considered the foundation in dynamic programming, as this enables us to divide the problem into sub-problems. If a given control problem does not possess this structure, solving it, analytically or by means of numerical methods, requires a different approach than dynamic programming. 

In regard to impulse problems, there are several frameworks in which Bellman's principle
can be expressed, e.g obstacle problems, Snell envelopes and  Reflected Backward Stochastic Differential Equations. To clarify this, as well as how these approaches developed, we proceed with a brief summary of a subset of the vast number of contributions that has developed this field.

\subsection{A brief summary of the two main approaches to impulse control and their history}
The mathematical tools that are used nowadays to solve impulse problems resemble the ones used to solve a closely related problem, the optimal stopping problem. That these two problems are related, is at least intuitively clear, since an impulse problem involves a sequences of stopping times when we alter the system.

In the early 1950’s, inspired by Wald’s work\cite{wald} and using Doob’s theory of martingales, Snell \cite{snell} was the first to explicitly formulate and solve an optimal stopping problem. This was done by means, which we today consider to be part of the so called "Martingale approach" to optimal stopping.

Around the latter half of the 1950’s, another method for solving stopping problems was proposed. This method related the stopping problem to a obstacle problem in the way today known as the "principle of smooth fit" or "high contact principle". While using this approach one has to assume that the underlying dynamic has a Markovian structure. Methods stemming from this approach are thus today referred to as the "Markovian approach". The exact origins of the many different aspects of this method is unclear. Some of the important early contributors to the approach are listed in \cite{opt}.

In the early 1960’s, Dynkin \cite{dyn} characterized the value function of the optimal stopping problem as the smallest superhamonic majorant of the pay-off function as well as relating the two approaches. From this point onwards, an intense study of the stopping problem commenced and various generalized methods were introduced to handle it, e.g variational inequalities and viscosity solutions. It seems that during this time the seeds were sown for the modern mathematical treatment of the impulse problem. In particular, A. Bensoussan and J.L. Lions\cite{BL}, who used functional analysis to solve a stopping problem, went on to formulate and solve an impulse problem in the same framework. A few years later, motivated by applications in economics, Brekke and Øksendal\cite{bo} were able to relax some of the existing conditions on the data. This was achieved by working with the high contact principle and hence did not rely on weak derivatives, in turn making their approach better suited for applications. Both Bensoussan and Lions and Brekke and Øksendals methods are related to obstacle problems but expresses this in different ways.

In the middle of the 1990’s, El Karoui et al.\cite{elk} used the Snell envelope to solve a stopping problem of Lagrange-Meyer type. Their framework was later extended to handle optimal switching, first with two modes\cite{HaJe} and later a finite number of modes\cite{DJ}. These extensions were made possible by the shape of the Lagrange-Meyer pay-off, and entailed establishing Bellman's principle in terms of Snell envelopes. In this paper we will follow this line of work by establishing a Bellman principle in the setting of a path-dependant impulse problem where the control enters the volatility term $b$.

The above summary of contributions, to both impulse and stopping problems, is far from complete. The purpose of including it, is mainly to highlight the fact that both problems can be treated in two separate mathematical languages, in turn putting our result into a broader context.

\subsection{Our contribution and approach}
Most of the work on impulse problems has been carried out using the Markovian approach. The theorems that have been proved are in most cases so called verification theorems. In such theorems the existence of an optimal control often relies on the existence of a solution to a certain equation, the latter of which also expresses Bellman's principle but in terms of obstacle problems. In order to use the results one furthermore has to find this solution. These requirements are in general hard to fulfill, in particular proving existence of a sufficiently regular solution.

Within the Markovian framework there is recent work \cite{tysk} that provides a verification theorem which has less restrictive assumptions on the solution. In particular, their result only requires continuity of the solution and also proves that such a solution exists. Besides reducing the complexity of the assumptions in \cite{tysk}, our method allows, in the Markovian setting, for negative costs, less regular data in the cost functional and need not the assume admissibility of the optimal control.

In the non-Markovian setting, Djehiche, Hamadène and Hdhiri \cite{DHH} used families of interconnected Snell envelopes to characterize and prove existence of an optimal control. However, their formulation of the impulse problem differs somewhat from ours and that of e.g \cite{OS}.

Our setup can briefly be characterized as follows. Our state will have a dynamic similar to $(1)$, but we shall allow the coefficients to depend on the entire path. In contrast to \cite{DHH}, in addition to how the control acts on the state and how the latter depends on the former, we only assume that our control set is a compact subset of $\mathbb{R}^n$.

This will force us to overcome a different set of problems, in particular, our so called verification family will be different and we will have to use the concept of stochastic flows to obtain Lyapunov-like stability of our state in connection to our impulses. To the best of our knowledge there are no results on the non-Markovian impulse problem as formulated by us.

We mention the results of \cite{P1}, which we extend by considering impulse controls rather than switching controls and a more general trajectory dependence. The results are also related to the work in \cite{P2} where an abstract impulse control problem is solved.

The paper is structured as follows. The next section contains the formulation of the problem as well as the necessary definitions. The third section treats the underlying state and how it behaves in relation to impulses. Section $4$ contains our verification theorem followed by Section $5$ where we show that our assumptions are sufficient to guarantee existence of a solution. The last section contains an application of our results. In particular, impulse control of stochastic delay differential equations(SDDEs), which are necessarily non-Markovian. We also provide numerical calculations on a well known system with delays using recent proposed methods.

\section{Formulation, Assumptions and Auxiliary results}
Once and for all we fix a given filtered probability space $(\Omega,\mathcal{F},\mathbb{P},\{\mathcal{F}_{t}\})$. The filtration is the natural filtration of a $n$-dimensional Brownian motion $B_{t}$ defined on this space and is in addition completed with the $\mathbb{P}$-null sets.

Furthermore, we let $\mathcal{P}$ denote the set of real-valued $ \{\mathcal{F}_{t}\}$-progressive processes. For $p>1$ we consider the following subsets of $\mathcal{P}$

$\mathcal{H}^{p}=\{X;\mathbb{E}[\int_{0}^{T}\abs{X_{t}}^{p} dt] < \infty  \}$

$\mathcal{S}^{p}=\{X;\mathbb{E}[\sup_{t\in[0,T]} \abs{X_{t}}^{p}]< \infty \, \text{and a.s. cadlag}\}$

$\mathcal{S}^{p}_{c}=\{X;\mathbb{E}[\sup_{t\in[0,T]} \abs{X_{t}}^{p}]< \infty \, \text{and a.s. continuous}\}$

Moreover we let $\mathcal{D}$ denote the space of all  cadlag functions and $\norm{\cdot}$ denote the standard Euclidean norm on $\mathbb{R}^{n}$

The set of $\{\mathcal{F}_{t}\}$-stopping times after some stopping time $\tau$ will be denoted $\mathcal{T}_{\tau}$ i.e all stopping times $\hat{\tau}$ such that $\tau \le \hat{\tau} \le T, \mathbb{P}$-a.s. Moreover, $\mathcal{F}_{\tau}$ will denote the sigma algebra at a stopping time $\tau$.

Throughout the text $C$ will denote a generic constant and we will use $C_{p}$ for constants for which we wish to indicate the origin, where $p$ is the relation to some $L^p$ space, if present, e.g if Burkholder-Davis-Gundy is used. Below constants $K_{i}$ will be introduced which will represent bounds related to assumptions on the dynamic.

A control $\nu$ is a sequence of pairs $(\tau_{i}$, $\xi_{i})_{i\in \mathbb{N}}$ where $\tau_{i}$ is an increasing sequence of $\{\mathcal{F}_{t}\}$-stopping times and $\xi_{i}$ is a sequence of $\mathcal{F}_{\tau_{i}}$-measurable real-valued random variables that take values in a compact set $U \subset \mathbb{R}^m$ according to the magnitude of the impulse at $\tau_{i}$. Any control that satisfy $\lim \tau_{i}=T, \ \mathbb{P}$-a.s,  is called admissible and we denote the set of all such controls  $\mathcal{A}$. The subset of $\mathcal{A}$ such that $\mathbb{P}(\omega ; \tau_{i}(\omega)<T$ for all $i\ge 0)=0$ are called the finite controls and is denoted $\mathcal{A}_{f}$. Moreover this subset contains the following subsets $\mathcal{A}^{k}_{f}=\{\nu \in \mathcal{A}_{f}:\tau_{k+1}=T \}$. We introduce the following operation on the controls,
\begin{Def}
	Given $\nu_{1}= (\tau_{i,1}, \xi_{i,1})_{i\in \mathbb{N}}\in \mathcal{A}_{f}$ and $\nu_{2}= (\tau_{i,2}, \xi_{i,2})_{i\in \mathbb{N}}\in \mathcal{A}$  we set
	\begin{equation}
		\nu_{1}\circ \nu_{2}:=(\tau_{1,1},\xi_{1,1}, \ldots \tau_{I,1},\xi_{I,1},\tau_{I,1}\vee\tau_{1,2},\xi_{1,2},\ldots \tau_{I,1}\vee\tau_{i,2},\xi_{i,2}\ldots),
	\end{equation}
\end{Def} 
where $I(w)=\min\{i;\tau_{i}(\omega) \ge T\}$. For readability, we will denote a large number of compositions of this operation by $\bigcirc_{i=0}^{n}\nu_{i}=\nu_{1}\circ \cdots \circ \nu_{n}$. Note that in order for such compositions to be well defined, the first $n-1$ controls has to belong to $\mathcal{A}_{f}$

The coefficients and the jumps of the state dynamics will be subject to the following constraints,

\begin{Ass}\label{A1}
	$(i)\,a,b:[0,T] \times \Omega \times \mathcal{D}^n \rightarrow \mathbb{R}^{l\times q}$, $a(t,\omega,0)$ and $b(t,\omega,0)$ are a.s continuous in $t$ and the components satisfy 
	
	\begin{equation}
		\mid a_{i}(t,\omega,\{X_{s}\}_{s \le t})-a_{i}(t,\omega,\{Y_{s}\}_{s \le t})\mid  \le K_{1} \sup_{s \le t} \norm{X_{s}-Y_{s}}
	\end{equation}
	
	\begin{equation}
		\int_0^r \mid a_{i}(t,\omega,\{X_{s}\}_{s \le t})-a_{i}(t,\omega,\{Y_{s}\}_{s \le t})\mid dt \leq K_{2} \int_0^r\norm{X_{t}-Y_{t}}dt
	\end{equation}
	
	\begin{equation}
		\mid b_{i,j}(t,\omega,\{X_{s}\}_{s \le t})-b_{i,j}(t,\omega,\{Y_{s}\}_{s \le t})\mid \le K_{3} \sup_{s \le t} \norm{X_{s}-Y_{s}}
	\end{equation}
	\begin{equation}
		\int_0^r \mid b_{i,j}(t,\omega,\{X_{s}\}_{s \le t})-b_{i,j}(t,\omega,\{Y_{s}\}_{s \le t})\mid^2 dt \leq K_{4} \int_0^r\norm{X_{t}-Y_{t}}^2dt
	\end{equation}
	with $K_{1},K_{2},K_{3},K_{4}$ being constants.

	$(ii)\,\Gamma:\mathbb{R}^n \times U \rightarrow  \mathbb{R}^n$ satisfy
	\begin{equation}
		\norm{\Gamma(x,u)}\le C \vee \norm{x} \, \text{and} \, 
	\end{equation}
	\begin{equation}\label{gam}
		\norm{\Gamma(x,u)-\Gamma(y,v)} \le \norm{(x,u)-(y,v)} \, \text{for all u,v}  \in U \text{and} \, x,y \in \mathbb{R}^n
	\end{equation}
\end{Ass}
\begin{Rem}
	These assumptions on $a$ and $b$ imply \\
	\begin{equation}\mid a_{i}(t,\omega,\{X_{s}\}_{s \le t})\mid^p  \le C (\sup_{s \le t} \norm{X_{s}}^p+1) 
	\end{equation}
	for any $p\ge 1$.
\end{Rem}
The following definition and theorem  are found in \cite{Pro}. 
\begin{Def}
	We say that $F:[0,T] \times \Omega \times\mathcal{D}^{n}\rightarrow \mathbb{R}$ is a functional Lipschitz operator if for any $X,Y \in (\mathcal{S}^{p})^{n}$ we have that
	
	(i) for $\tau \in \mathcal{T}_{0}$, if $X^{\tau-}=Y^{\tau-}$ then $F(t,\omega,X)^{\tau-}=F(t,\omega,Y)^{\tau-}$
	
	(ii) there is a finite increasing process $K_{t}$ such that
	\begin{equation}
		\mid F(t,\omega,X)-F(t,\omega,Y) \mid \le K_{t}\sup_{s\le t}\norm{X-Y}_{s}
	\end{equation}
	
\end{Def}
where $X_{t}^{\tau-}=X_{t}\chi_{\{t<\tau\}}+X_{\tau-}\chi_{\{t\ge\tau\}}.$
\begin{Thm}\label{P1}\cite{Pro}
	Let $F_{1},F_{2}$ be matrices with components that are functional Lipschitz operators. Then there is a unique function $X(t,\omega,x)$ on $\mathbb{R}_{+}\times\Omega\times\mathbb{R}^n$ such that
	
	(i)for each $x$, $X^{x}_{t}=X(t,\omega,x)$ is a solution of
	
	$X^{x}_{t}=x+\int_{0}^{t}F_{1}(X^{x})_{s}ds+\int_{0}^{t}F_{2}(X^{x})_{s}dB_{s} \quad (*)$
	
	(ii)for a.e. $\omega$, the flow $x\rightarrow X(\cdot,\omega,x)$ from $\mathbb{R}^n$ into $\mathcal{D}^n$ is continuous in the topology of uniform convergence on compacts.
\end{Thm}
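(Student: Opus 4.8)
The plan is to prove the two assertions in turn. For each fixed $x$ I would obtain a unique solution of $(*)$ by a Picard iteration carried out in an exponentially weighted version of the $\mathcal{S}^p$ norm; then I would upgrade the family $\{X^x\}_x$ to a flow by deriving an $L^p$ bound on $X^x-X^y$ in terms of $\norm{x-y}$ and feeding it into Kolmogorov's continuity criterion in the variable $x$. It suffices to work on a finite interval $[0,T]$, the statement on all of $\mathbb{R}_+$ then following by patching the (unique) solutions on $[0,N]$, $N\in\mathbb{N}$.

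For existence and uniqueness I would introduce the map $\Phi$ on $(\mathcal{S}^p)^n$ given by
\begin{equation}
	\Phi(X)_t = x + \int_0^t F_1(X)_s\,ds + \int_0^t F_2(X)_s\,dB_s ,
\end{equation}
and first check, using the linear growth that follows from functional Lipschitzness — insert $Y\equiv 0$ in part (ii) of the definition and use local boundedness of $F_i(\cdot,\cdot,0)$ — together with the Burkholder--Davis--Gundy inequality, that $\Phi$ maps $(\mathcal{S}^p)^n$ into itself. For $X,Y\in(\mathcal{S}^p)^n$, Jensen on the drift term, BDG on the stochastic integral and property (ii) then yield an estimate of the form
\begin{equation}
	\mathbb{E}\Big[\sup_{s\le t}\norm{\Phi(X)_s-\Phi(Y)_s}^p\Big] \le C_p \int_0^t \mathbb{E}\Big[\sup_{u\le s}\norm{X_u-Y_u}^p\Big]\,ds .
\end{equation}
Renorming $(\mathcal{S}^p)^n$ with the equivalent norm $\norm{X}_\lambda := \big(\mathbb{E}[\sup_{s\le T}\me^{-\lambda s}\norm{X_s}^p]\big)^{1/p}$ and iterating this bound makes $\Phi$ a strict contraction once $\lambda$ is large, so Banach's fixed point theorem gives the unique $X^x\in(\mathcal{S}^p)^n$ solving $(*)$; uniqueness among merely progressive solutions follows from the same Gronwall estimate after localising the martingale part by a sequence of stopping times. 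The causality condition (i) enters precisely here, ensuring that $\Phi$ preserves progressivity and that the fixed point is a genuine solution.

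For the flow I would apply the same estimates to $X^x-X^y$, whose equation has constant initial value $x-y$ and coefficient increments $F_i(X^x)_s-F_i(X^y)_s$, so that (ii) and Gronwall give, for every $p>1$,
\begin{equation}
	\mathbb{E}\Big[\sup_{s\le T}\norm{X^x_s-X^y_s}^p\Big] \le C_p\,\norm{x-y}^p .
\end{equation}
Since $p$ is free I would take $p>n$ and view $x\mapsto X^x$ as a map from $\mathbb{R}^n$ into the Banach space $\mathcal{D}^n$ equipped with the norm of uniform convergence on $[0,T]$; the last display is then exactly the hypothesis of Kolmogorov's continuity theorem, which produces a modification $x\mapsto X(\cdot,\omega,x)$ that is, off a single null set, locally Hölder in $x$ for the uniform norm, hence continuous in the topology of uniform convergence on compacts. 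A density-and-continuity argument — this modification agrees a.s. with $X^x$ for each rational $x$, and both sides of $(*)$ depend continuously on $x$ by the estimate above and the functional Lipschitz bounds on $F_1,F_2$ — then shows it solves $(*)$ simultaneously for all $x$, giving (i), while (ii) is the continuity just obtained.

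The step I expect to be the real work is the bookkeeping forced by $F_1,F_2$ being path functionals rather than pointwise functions of the current state: every estimate has to propagate the running supremum $\sup_{u\le s}\norm{\cdot}$ through the Gronwall iteration, which is why the argument lives in $\mathcal{S}^p$ with an exponential weight rather than in $\mathcal{H}^p$, and why the non-anticipating condition (i) cannot be dropped when passing to the fixed point. The remaining ingredients — BDG, Gronwall and Kolmogorov's lemma — are standard.
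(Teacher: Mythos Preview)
The paper does not prove this theorem; it is quoted verbatim from \cite{Pro} and used as a black box. There is therefore no ``paper's own proof'' to compare against.

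Your sketch is the standard route and is essentially how the result is obtained in \cite{Pro}: Picard iteration in a weighted $\mathcal{S}^p$ norm for existence and uniqueness, then the $L^p$ estimate $\mathbb{E}[\sup_{s\le T}\norm{X^x_s-X^y_s}^p]\le C_p\norm{x-y}^p$ fed into Kolmogorov's lemma (with $p>n$) to get the continuous flow. One point to flag: as stated in Definition~2.2, $K_t$ is only a \emph{finite increasing process}, not a constant, so the clean inequality
\[
\mathbb{E}\Big[\sup_{s\le t}\norm{\Phi(X)_s-\Phi(Y)_s}^p\Big]\le C_p\int_0^t\mathbb{E}\Big[\sup_{u\le s}\norm{X_u-Y_u}^p\Big]ds
\]
is not immediate without either assuming $K_t$ is bounded (as the paper in fact does in its applications; see Assumption~1 and Lemma~3.1) or first localising by the stopping times $\sigma_N=\inf\{t:K_t\ge N\}$. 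Similarly, the linear-growth step ``insert $Y\equiv0$'' needs $F_i(\cdot,\cdot,0)$ to lie in a suitable space, which the bare functional Lipschitz definition does not guarantee; in Protter's treatment this is part of the standing hypotheses on the driving term. Modulo these localisation/integrability caveats, your outline is correct and matches the reference.
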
 
Our definition of a functional Lipschitz operator is less general than the one in \cite{Pro}. The definition we give, which the coefficients in Assumptions \ref{A1} falls within, is mentioned as the principle case in that reference.

With these assumptions, definitions and results at hand, we proceed by defining the dynamics for a given $\nu \in \mathcal{A}$. Let
\begin{equation}\label{ha1}
	dX^{x,\nu,0}_{t}=x+a(t,\omega,\{X^{x,\nu,0}_{s}\}_{s \le t})dt+b(t,\omega,\{X^{x,\nu,0}_{s}\}_{s \le t})dB_{t} \quad 0 \le t \le T 
\end{equation}
and recursively define
\begin{equation}\label{ha2}
	\begin{aligned}
		&dX^{x,\nu,j}_{t}=a(t,\omega,\{X^{x,\nu,j}_{s}\}_{s \le t})dt+b(t,\omega,\{X^{x,\nu,j}_{s}\}_{s \le t})dB_{t} \quad \tau_{j}< t \le T\\
		&X^{x,\nu,j}_{\tau_{j}}=\Gamma(X^{x,\nu,j-1}_{\tau_{j}},\xi_{j}) \\
		&X^{x,\nu,j}_{t}=X^{x,\nu,j-1}_{t} \quad 0 \le t  < \tau_{j}.
	\end{aligned}
\end{equation}
To obtain our controlled state we put $\limsup_{j\rightarrow \infty} X^{x,\nu,j}=X^{x,\nu}$.

The average performance of the control is measured by the following functional,

\begin{equation}
	J(\nu)=\mathbb{E}[\int_{0}^{T}f(s,X^{\nu}_{s})ds-\sum_{i} \ell(X^{\nu}_{\tau_{i}},\xi_{i},\tau_{i})]
\end{equation}
where $f$ and $\ell$ are subject to the following constraints.
\begin{Ass}
	$f:[0,T]\times \mathbb{R}^n \rightarrow \mathbb{R}$, $\ell: \mathbb{R}^n\times U \times [0,T]\rightarrow \mathbb{R}$,
	and $f(t,0)$ is continuous along with the following additional constraints
	\begin{equation}
		\begin{aligned}
			\mid f(t,x)-f(t,y) \mid &\le K_{5}\norm{x-y} \\
			\ell({x},u,t) &>K_{6}>0 \\
			\mid\ell(x_{1},u_{1},t_{1})-\ell(x_{2},u_{2},t_{2}) \mid &\le K_{7} \norm{(x_{1},u_{1},t_{1})-(x_{2},u_{2},t_{2})}
		\end{aligned}
	\end{equation}
	for constants $K_{5}, K_{6}$ and $K_{7}$.
\end{Ass}
\begin{Rem}
	These assumptions on $f$ implies 
	\begin{equation}
		\mid f(t,x) \mid^p \le C(1+ \norm{x}^p)
	\end{equation} 
	for any $p\ge 1$.
\end{Rem}

The problem of finding an optimal control can be stated as follows;
\begin{Pro}\label{vafan}
	Given $(a,b,\Gamma,f,\ell)$ find $\nu^{*}=(\tau^{*}_{i},\xi^{*}_{i})_{i\in \mathbb{N}} \in \mathcal{A}$ such that
	\begin{equation}
		J((\tau^{*}_{i},\xi^{*}_{i})_{i\in \mathbb{N}})=\sup_{\nu \in \mathcal{A}}J((\tau_{i},\xi_{i})_{i\in \mathbb{N}})
	\end{equation}
\end{Pro}

We proceed by stating a few results which we will need in order to show that this problem has a solution, the first of which is the most important.

We first recall the notion of a process being of class $[D]$.
\begin{Def}
	We say that a process $X_{t}$ is of class $[D]$ if $\{X_{\tau}:\tau < \infty\}$ is uniformly integrable.
\end{Def}

\begin{Thm}\label{Snell}[Snell envelope]\cite{Ka}

	Let $X_{t}$ be a process which is  $\mathbb{R}$-valued, adapted, cadlag and of class $[D]$. Then there exists a unique smallest dominating supermartingale $Z^{X}$ that is also $\mathbb{R}$-valued, adapted, cadlag and of class $[D]$. The process $Z^{X}$ is called the Snell envelope of $X$ and it has the following properties:

	(i)For any stopping time $\theta$ we have
	\begin{equation}
		Z^{X}_{\theta}=\esssup _{\tau \in \mathcal{T}_{\theta}}\mathbb{E}[X_{\tau} \mid \mathcal{F}_{\theta}] \,\, (\text{and then} \ Z^{X}_{T}=X_{T}).
	\end{equation}

	(ii)If $X$ is continuous, $\theta$ is a stopping time and we let  $\tau_{\theta}^{*}=\inf\{s\ge \theta : Z^{X}_{s}=X_{s} \} \wedge T$ then  $\tau_{\theta}^{*}$ is optimal after $\theta$   i.e
	\begin{equation}
		Z^{X}_{\theta}=\mathbb{E}[Z^{X}_{\tau_{\theta}^{*}} \mid \mathcal{F}_{\theta}]=\mathbb{E}[X_{\tau_{\theta}^{*}} \mid \mathcal{F}_{\theta}]=\esssup_{\tau \in \mathcal{T}_{\theta}}\mathbb{E}[X_{\tau} \mid \mathcal{F}_{\theta}]
	\end{equation}
	
	(iii)If $ (X^{n})_{n\ge 0}$ and  $X$ are cadlag of class [D] such that $ (X^{n})_{n\ge 0}$
	converges increasingly and pointwisely to $X$ then $Z^{X^{n}}$
	converges increasingly and pointwisely to $ Z^{X}$. Moreover if  $X$ is in $ \mathcal{S}^p_{c}$ then  $Z^{X}$ is in $ \mathcal{S}^p_{c}$.
	
\end{Thm}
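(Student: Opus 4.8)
Since Theorem~\ref{Snell} is classical we only sketch a proof, referring to \cite{Ka} for complete details; the plan is to construct $Z^X$ by hand from the optional-sampling formula in (i) and then read off the remaining properties. For a stopping time $\theta$ set $\tilde Z_\theta:=\esssup_{\tau\in\mathcal T_\theta}\mathbb E[X_\tau\mid\mathcal F_\theta]$; the class $[D]$ hypothesis makes $\{X_\tau:\tau\in\mathcal T_0\}$ uniformly integrable, hence $L^1$-bounded, so each $\tilde Z_\theta$ is a genuine integrable random variable. The structural point is that $\{\mathbb E[X_\tau\mid\mathcal F_\theta]:\tau\in\mathcal T_\theta\}$ is directed upward: for $\tau_1,\tau_2\in\mathcal T_\theta$ the stopping time $\tau_1\mathbf{1}_A+\tau_2\mathbf{1}_{A^c}$ with $A=\{\mathbb E[X_{\tau_1}\mid\mathcal F_\theta]\ge\mathbb E[X_{\tau_2}\mid\mathcal F_\theta]\}\in\mathcal F_\theta$ dominates both, so there is a sequence $\tau_k\in\mathcal T_\theta$ with $\mathbb E[X_{\tau_k}\mid\mathcal F_\theta]\uparrow\tilde Z_\theta$. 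Monotone convergence then gives $\mathbb E[\tilde Z_\theta]=\sup_{\tau\in\mathcal T_\theta}\mathbb E[X_\tau]$ and, via the tower property, the supermartingale system inequality $\mathbb E[\tilde Z_{\theta_2}\mid\mathcal F_{\theta_1}]\le\tilde Z_{\theta_1}$ for $\theta_1\le\theta_2$. I would then invoke the aggregation theorem of the general theory of processes: since $X$ is cadlag and of class $[D]$, the map $t\mapsto\mathbb E[\tilde Z_t]$ is right-continuous, which is exactly the condition ensuring that the family $(\tilde Z_\theta)$ indexed by stopping times has a cadlag adapted aggregator $Z^X$ with $Z^X_\theta=\tilde Z_\theta$ a.s.\ for every $\theta$. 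Taking $\tau=\theta$ gives $Z^X\ge X$; and any cadlag supermartingale $Y\ge X$ satisfies $Y_\theta\ge\mathbb E[Y_\tau\mid\mathcal F_\theta]\ge\mathbb E[X_\tau\mid\mathcal F_\theta]$ for every $\tau\in\mathcal T_\theta$ by optional sampling, so $Y_\theta\ge Z^X_\theta$; thus $Z^X$ is the smallest dominating supermartingale, hence unique, and $Z^X_T=X_T$ because $\mathcal T_T=\{T\}$. This proves (i).

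For (ii), suppose $X$ is continuous and write the Doob--Meyer decomposition $Z^X=Z^X_0+M-A$ with $M$ a uniformly integrable martingale --- continuous, since we work in a Brownian filtration --- and $A$ predictable increasing with $A_0=0$. By continuity of $X$ one has $\{s<\tau^*_\theta\}\subset\{Z^X_s>X_s\}$, and the increasing process of a Snell envelope increases only at times $t$ with $Z^X_{t-}=X_{t-}$; hence $A_{\tau^*_\theta}=A_\theta$, so $(Z^X_{t\wedge\tau^*_\theta})_{t\ge\theta}$ is a martingale. Therefore $Z^X_\theta=\mathbb E[Z^X_{\tau^*_\theta}\mid\mathcal F_\theta]=\mathbb E[X_{\tau^*_\theta}\mid\mathcal F_\theta]$, the last equality since $Z^X_{\tau^*_\theta}=X_{\tau^*_\theta}$ by continuity, and by (i) this equals $\esssup_{\tau\in\mathcal T_\theta}\mathbb E[X_\tau\mid\mathcal F_\theta]$, i.e.\ $\tau^*_\theta$ is optimal after $\theta$.

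For (iii), if $X^n\uparrow X$ pointwise then monotonicity of conditional expectations and of the esssup gives $Z^{X^n}\le Z^{X^{n+1}}\le Z^X$, so $Z^\infty:=\lim_n Z^{X^n}$ exists pointwise. Passing to the limit in $Z^{X^n}_\theta\ge\mathbb E[X^n_\tau\mid\mathcal F_\theta]$ by monotone convergence shows $Z^\infty$ dominates $X$, and passing to the limit in the supermartingale inequality --- the sandwich $Z^{X^0}\le Z^{X^n}\le Z^X$ together with class $[D]$ supplying the uniform integrability --- shows that $Z^\infty$, in its cadlag modification, is a dominating supermartingale; minimality of $Z^X$ then forces $Z^\infty\ge Z^X$, hence $Z^\infty=Z^X$. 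Finally, if $X\in\mathcal S^p_c$, then $\abs{Z^X_t}\le\mathbb E[\sup_{s\le T}\abs{X_s}\mid\mathcal F_t]$, so Doob's $L^p$ maximal inequality yields $\mathbb E[\sup_{t\le T}\abs{Z^X_t}^p]\le C\,\mathbb E[\sup_{t\le T}\abs{X_t}^p]<\infty$ and $Z^X\in\mathcal S^p$; moreover $Z^X$ is continuous, since $M$ is continuous while $A$, increasing only at times where $Z^X_{t-}=X_{t-}$ with $X$ continuous, has no jumps. Hence $Z^X\in\mathcal S^p_c$.

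The bookkeeping above with optional sampling and monotone limits is routine; the two genuinely delicate ingredients are the aggregation of the family $(\tilde Z_\theta)$ into an honest cadlag process --- which rests on the section and regularization results of the general theory of processes --- and, in (ii), the fact that the predictable compensator $A$ increases only on the contact set $\{Z^X_{t-}=X_{t-}\}$.
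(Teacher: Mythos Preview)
The paper does not prove Theorem~\ref{Snell}; it is quoted as a classical result from \cite{Ka}, with the subsequent remark pointing to \cite{KQ} as a less general reference. Your sketch follows the standard construction (esssup over stopping times, lattice property, aggregation via the general theory, Doob--Meyer for the optimality of $\tau^*_\theta$, monotone passage to the limit for (iii)) and is a reasonable outline of the classical argument, but there is no proof in the paper itself to compare it against.
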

\begin{Rem}
	The results of \cite{Ka} are obtained in a very general framework. A less general reference is \cite{KQ}.
\end{Rem}
The following definition  and theorems belong to the so called general theory of stochastic processes, proofs can be found in $\cite{KT}$. They will be needed in the construction of the optimal control.
\begin{Def}
	Given $A \subset \Omega\times\mathbb{R}^n$ we define the projection of $A$ onto $\Omega$ by $\pi_{\Omega}(A)=\{\omega\in\Omega :\exists x \in \mathbb{R}^{n} , (\omega,x)\in A\}$
\end{Def}

\begin{Thm}\label{mo}[Measurable projection]
	
	Let $(\Omega,\mathcal{F},\mathbb{P})$ be complete. For every $A \in \mathcal{F}\otimes\mathcal{B}(\mathbb{R}^n)$ the set $\pi_{\Omega}(A)$ is $\mathcal{F}$-measurable. 
\end{Thm}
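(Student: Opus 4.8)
The plan is to deduce the statement from the classical theory of the Suslin operation and analytic sets, the only genuinely non-trivial input being Choquet's capacitability theorem. Write $\mathcal{A}(\mathcal{E})$ for the result of the Suslin operation applied to a paving $\mathcal{E}$, i.e. the collection of all sets $\bigcup_{\sigma\in\mathbb{N}^{\mathbb{N}}}\bigcap_{k\ge 1}E_{\sigma|k}$ with $E_{n_{1}\ldots n_{k}}\in\mathcal{E}$. I would first recall the two structural facts needed: (a) for any paving $\mathcal{E}$ the class $\mathcal{A}(\mathcal{E})$ contains $\mathcal{E}$ and is stable under countable unions and countable intersections; and (b) if $(\Omega,\mathcal{F},\mathbb{P})$ is complete then every set in $\mathcal{A}(\mathcal{F})$ is $\mathcal{F}$-measurable. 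For (b) the argument is: the outer measure $\mathbb{P}^{*}$ is an $\mathcal{F}$-capacity (monotone, continuous from below on all sets, continuous from above along decreasing sequences in $\mathcal{F}$, the last because $\mathbb{P}$ is a finite measure agreeing with $\mathbb{P}^{*}$ on $\mathcal{F}$); Choquet's theorem then gives, for $A\in\mathcal{A}(\mathcal{F})$, that $\mathbb{P}^{*}(A)=\sup\{\mathbb{P}(F):F\in\mathcal{F},\,F\subseteq A\}$ (using $\mathcal{F}_{\delta}=\mathcal{F}$); choosing $F_{n}\subseteq A$ in $\mathcal{F}$ with $\mathbb{P}(F_{n})\uparrow\mathbb{P}^{*}(A)$ and setting $F_{\ast}=\bigcup_{n}F_{n}$, the set $A\setminus F_{\ast}$ has zero outer measure, hence lies in $\mathcal{F}$ by completeness, so $A=F_{\ast}\cup(A\setminus F_{\ast})\in\mathcal{F}$.

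The remaining, and really the only problem-specific, task is to show $\pi_{\Omega}(A)\in\mathcal{A}(\mathcal{F})$ whenever $A\in\mathcal{F}\otimes\mathcal{B}(\mathbb{R}^{n})$. Let $\mathcal{K}$ be the paving of compact subsets of $\mathbb{R}^{n}$ and $\mathcal{R}=\{F\times K:F\in\mathcal{F},\,K\in\mathcal{K}\}$. First I would check $\mathcal{F}\otimes\mathcal{B}(\mathbb{R}^{n})\subseteq\mathcal{A}(\mathcal{R})$: every Borel set $B\subseteq\mathbb{R}^{n}$ lies in $\mathcal{A}(\mathcal{K})$ (Borel sets are analytic), and $F\times(\,\cdot\,)$ commutes with countable unions and intersections, so each rectangle $F\times B$ lies in $\mathcal{A}(\mathcal{R})$; likewise $(F\times B)^{c}=(F^{c}\times\mathbb{R}^{n})\cup(\Omega\times B^{c})$ lies in $\mathcal{A}(\mathcal{R})$; hence the class $\{G:G\in\mathcal{A}(\mathcal{R}),\,G^{c}\in\mathcal{A}(\mathcal{R})\}$ is, by (a), a $\sigma$-algebra containing all rectangles, and therefore contains $\mathcal{F}\otimes\mathcal{B}(\mathbb{R}^{n})$. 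Thus $A=\bigcup_{\sigma}\bigcap_{k}(F_{\sigma|k}\times K_{\sigma|k})$, and after replacing each $F_{n_{1}\ldots n_{k}}\times K_{n_{1}\ldots n_{k}}$ by $\bigcap_{j\le k}(F_{n_{1}\ldots n_{j}}\times K_{n_{1}\ldots n_{j}})$ — still of the form $F'\times K'$ with $F'\in\mathcal{F}$, $K'\in\mathcal{K}$ — I may assume the scheme is decreasing in $k$.

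Now I would push the projection through. One always has $\pi_{\Omega}\bigl(\bigcap_{k}(F_{\sigma|k}\times K_{\sigma|k})\bigr)\subseteq\bigcap_{k}\pi_{\Omega}(F_{\sigma|k}\times K_{\sigma|k})$; for the reverse inclusion, if $\omega$ belongs to every $\pi_{\Omega}(F_{\sigma|k}\times K_{\sigma|k})$ then each fibre $\{x:(\omega,x)\in F_{\sigma|k}\times K_{\sigma|k}\}$ is nonempty, hence equals $K_{\sigma|k}$ and in particular $K_{\sigma|k}\neq\emptyset$; these compacta are decreasing in $k$, so by the finite intersection property $\bigcap_{k}K_{\sigma|k}\neq\emptyset$, and any $x$ in it witnesses $\omega\in\pi_{\Omega}\bigl(\bigcap_{k}(F_{\sigma|k}\times K_{\sigma|k})\bigr)$. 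Since $\pi_{\Omega}$ commutes with the outer union, this yields $\pi_{\Omega}(A)=\bigcup_{\sigma}\bigcap_{k}\pi_{\Omega}(F_{\sigma|k}\times K_{\sigma|k})$, where each $\pi_{\Omega}(F_{\sigma|k}\times K_{\sigma|k})$ equals $F_{\sigma|k}$ or $\emptyset$, in either case a member of $\mathcal{F}$. Hence $\pi_{\Omega}(A)\in\mathcal{A}(\mathcal{F})$, and (b) finishes the proof.

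I would expect the main obstacle to be exactly the interchange of projection with the Suslin operation in the last paragraph: it fails for arbitrary Suslin schemes, and the resolution is to route $A$ through the auxiliary paving $\mathcal{R}$ so that the $\mathbb{R}^{n}$-side sets become compact, whereupon the finite intersection property rescues the reverse inclusion. The capacitability theorem invoked in (b) is of course the deep classical ingredient; I would either cite it outright or, for a self-contained account, reproduce Choquet's argument (reduction to a decreasing sequence, the estimate on the capacity of the kernel, and a diagonal extraction).
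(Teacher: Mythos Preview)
Your outline is correct and follows the classical route to the measurable projection theorem via the Suslin operation and Choquet's capacitability theorem; the reduction to the auxiliary paving $\mathcal{R}=\{F\times K\}$ with $K$ compact, the passage to a decreasing scheme, and the use of the finite intersection property to push $\pi_{\Omega}$ through the countable intersection are all in order.

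As for the comparison: the paper does not prove this theorem at all. It is stated as a known result from the general theory of stochastic processes, with a reference to \cite{KT} for the proof, and is used as a black box to obtain the two corollaries on measurability of $\omega\mapsto\sup_{x}h(\omega,x)$ and on measurable selection. So there is nothing to compare your argument against in the paper itself; what you have written is essentially the standard textbook proof that the cited reference would contain.
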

\begin{Cor}
	Let $(\Omega,\mathcal{F},\mathbb{P})$ be complete and $h(\omega,x)$ be a real-valued, measurable function on $(\Omega \times \mathbb{R}^n,\mathcal{F}\otimes \mathcal{B}(\mathbb{R}^n))$. Then given any $A \in \mathcal{F} \otimes \mathcal{B}(\mathbb{R}^n)$
	\begin{equation}
		g(\omega):=\sup_{x \in \mathbb{R}^n}\{h(\omega,x):(\omega,x)\in A \}
	\end{equation}
	is $\mathcal{F}$-measurable.
	\begin{proof}
		Given any real constant $K$ the following holds $\{ g(\omega)> K \}=\pi_{\Omega}(A\cap h^{-1}((K,\infty]))$. As $h$ is measurable $A\cap h^{-1}((K,\infty])\in \mathcal{F} \otimes \mathcal{B}(\mathbb{R}^n)$, applying Theorem \ref{mo}  finishes the proof.
	\end{proof}
\end{Cor}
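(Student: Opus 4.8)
The plan is to reduce measurability of $g$ to measurability of its superlevel sets and then exhibit each superlevel set as a projection, so that Theorem~\ref{mo} applies. Recall that a function into the extended reals is $\mathcal{F}$-measurable provided $\{\omega : g(\omega) > K\} \in \mathcal{F}$ for every $K$ in a dense subset of $\mathbb{R}$, since the rays $(K,\infty]$ generate $\mathcal{B}([-\infty,\infty])$; here one keeps in mind that $g$ may take the value $-\infty$, on those $\omega$ whose section $\{x : (\omega,x)\in A\}$ is empty, under the convention $\sup\emptyset = -\infty$, which causes no trouble.

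First I would fix $K \in \mathbb{R}$ and establish the set identity $\{g > K\} = \pi_{\Omega}\bigl(A \cap h^{-1}((K,\infty])\bigr)$. For the inclusion from left to right: if $g(\omega) > K$ then, by the very definition of the supremum, there is some $x \in \mathbb{R}^n$ with $(\omega,x) \in A$ and $h(\omega,x) > K$, i.e. $(\omega,x) \in A \cap h^{-1}((K,\infty])$, so $\omega$ lies in the projection. The reverse inclusion is immediate, since $(\omega,x) \in A$ with $h(\omega,x) > K$ forces $g(\omega) \ge h(\omega,x) > K$.

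Next I would note that the set being projected lies in the product $\sigma$-algebra: $h$ is $\mathcal{F}\otimes\mathcal{B}(\mathbb{R}^n)$-measurable and $(K,\infty]$ is Borel, hence $h^{-1}((K,\infty]) \in \mathcal{F}\otimes\mathcal{B}(\mathbb{R}^n)$, and intersecting with $A \in \mathcal{F}\otimes\mathcal{B}(\mathbb{R}^n)$ keeps us there. Completeness of $(\Omega,\mathcal{F},\mathbb{P})$ then permits the application of Theorem~\ref{mo}, giving $\{g > K\} \in \mathcal{F}$. Since this holds for all real (indeed all rational) $K$, $g$ is $\mathcal{F}$-measurable.

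The only genuinely substantial ingredient is Theorem~\ref{mo} itself, whose proof rests on the theory of analytic sets and really does require completeness of the probability space; relative to that input the argument above is pure bookkeeping, so I do not anticipate any obstacle. If one were not allowed to cite the measurable projection theorem, reproving it would be the hard part.
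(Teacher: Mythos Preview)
Your proof is correct and follows exactly the same route as the paper: you identify $\{g>K\}=\pi_\Omega(A\cap h^{-1}((K,\infty]))$ and invoke the measurable projection theorem. You merely spell out the two inclusions and the $\sup\emptyset=-\infty$ convention in more detail than the paper does.
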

\begin{Thm}\label{kokoko}[Measurable selection]
	
	Let $(\Omega,\mathcal{F},\mathbb{P})$ be complete. For any $A \in \mathcal{F}\otimes\mathcal{B}(\mathbb{R}^n)$ there is a $\mathcal{F}$-measurable function $\beta$ taking values in $\mathbb{R}^n \cup \{\infty\}$ such that
	\begin{equation}
		\{ (\omega,\beta(\omega))\in \Omega \times \mathbb{R}^n  \} \subset A \qquad and \qquad \{\omega \in \Omega: \beta(\omega)\in \mathbb{R}^n   \}=\pi_{\Omega}(A)
	\end{equation}
\end{Thm}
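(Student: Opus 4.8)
The plan is to construct $\beta$ one coordinate at a time by lexicographic minimisation, using the measurable projection result (Theorem \ref{mo}) to keep each coordinate $\mathcal{F}$-measurable. First I would normalise the ambient space: fixing a homeomorphism $h\colon\mathbb{R}^n\to(0,1)^n$, say $h(x)_i=(1+\me^{-x_i})^{-1}$, the set $\tilde A:=(\mathrm{id}_\Omega\times h)(A)$ lies in $\Omega\times(0,1)^n\subset\Omega\times[0,1]^n$ and belongs to $\mathcal{F}\otimes\mathcal{B}([0,1]^n)$ because $h$ is a Borel isomorphism onto its image; a selection $\tilde\beta$ of $\tilde A$ yields $\beta:=h^{-1}\circ\tilde\beta$ for $A$ with the two required properties, since $\pi_\Omega(\tilde A)=\pi_\Omega(A)=:\pi$, which is $\mathcal{F}$-measurable by Theorem \ref{mo}. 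Hence it suffices to treat $A\subseteq\Omega\times[0,1]^n$.

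The core of the argument is the following selection under the extra hypothesis that every section $A_\omega:=\{x:(\omega,x)\in A\}$ is \emph{closed} in $[0,1]^n$, hence compact. Define $\beta_1(\omega):=\min\{x_1:(x_1,\dots,x_n)\in A_\omega\}$ for $\omega\in\pi$ (attained by compactness) and $\beta_1:=2$ off $\pi$. For $c\in[0,1]$ one has $\{\omega\in\pi:\beta_1(\omega)\le c\}=\pi_\Omega\big(A\cap(\Omega\times([0,c]\times[0,1]^{n-1}))\big)$, which lies in $\mathcal{F}$ by Theorem \ref{mo}; with $\pi\in\mathcal{F}$ this makes $\beta_1$ $\mathcal{F}$-measurable. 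Put $A^{(1)}:=A\cap\{(\omega,x):x_1=\beta_1(\omega)\}$; since $\beta_1$ is $\mathcal{F}$-measurable, $A^{(1)}\in\mathcal{F}\otimes\mathcal{B}([0,1]^n)$, its sections over $\pi$ are nonempty and compact, and $\pi_\Omega(A^{(1)})=\pi$. Iterating, once $A^{(j)}$ is built with nonempty compact sections over $\pi$ whose first $j$ coordinates equal $\mathcal{F}$-measurable functions $\beta_1,\dots,\beta_j$, set $\beta_{j+1}(\omega):=\min\{x_{j+1}:x\in A^{(j)}_\omega\}$ (again attained and $\mathcal{F}$-measurable by the same projection identity) and $A^{(j+1)}:=A^{(j)}\cap\{x_{j+1}=\beta_{j+1}(\omega)\}$. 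After $n$ steps $A^{(n)}_\omega=\{\beta(\omega)\}$ for $\omega\in\pi$, with $\beta:=(\beta_1,\dots,\beta_n)$, and by construction $(\omega,\beta(\omega))\in A$; extending $\beta$ by $\infty$ off $\pi$ gives an $\mathcal{F}$-measurable $\mathbb{R}^n\cup\{\infty\}$-valued map with $\{\omega:\beta(\omega)\in\mathbb{R}^n\}=\pi$, as required.

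It remains to reduce a general $A\in\mathcal{F}\otimes\mathcal{B}([0,1]^n)$ to the closed-section case, i.e.\ to produce $A'\in\mathcal{F}\otimes\mathcal{B}([0,1]^n)$ with $A'\subseteq A$, every section $A'_\omega$ compact, and $\pi_\Omega(A')=\pi_\Omega(A)$; applying the previous step to $A'$ then finishes the proof. This inner compact-section approximation is where the real work sits, and is the main obstacle. A naive attempt to shrink the sections along a dyadic mesh fails, because the greedily shrunk sections may close in on a boundary point lying outside $A_\omega$ (think of $A_\omega$ open). The remedy is the same analytic-sets / capacitability machinery that already underlies Theorem \ref{mo}: representing $A$ through a Suslin scheme involving $\mathcal{F}$-measurable sets and compact subsets of $[0,1]^n$ and invoking Choquet's capacitability theorem produces such a compact-section kernel $A'$ with unchanged $\Omega$-projection. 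Since the excerpt imports Theorem \ref{mo} from \cite{KT}, the cleanest route is to cite the corresponding compact-section refinement from the same source; alternatively one includes the capacitability argument in full. Either way, the lexicographic selection above is routine, and it is this reduction that carries the weight.
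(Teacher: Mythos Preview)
The paper does not prove this theorem at all: it is one of the results explicitly imported from \cite{KT} (``proofs can be found in \cite{KT}''), so there is no in-paper argument to compare against. Your proposal is therefore not a variant of the paper's proof but a sketch of the classical von Neumann--Jankov selection argument that underlies the cited result.

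On its own merits your outline is sound and standard. The normalisation to $[0,1]^n$ and the lexicographic minimisation for sets with compact sections are correct, and your projection identity $\{\omega\in\pi:\beta_1(\omega)\le c\}=\pi_\Omega\big(A\cap(\Omega\times([0,c]\times[0,1]^{n-1}))\big)$ is exactly the right way to leverage Theorem~\ref{mo}. You are also right that the entire substance of the theorem lies in the reduction to the compact-section case, and that this reduction is precisely Choquet capacitability applied to the Suslin representation of $A$; in other words, the same analytic-set machinery that already proves Theorem~\ref{mo} is what does the work here too. Since the paper treats both Theorem~\ref{mo} and Theorem~\ref{kokoko} as black boxes from the same source, your suggestion to cite the compact-section refinement from \cite{KT} is in keeping with the paper's level of detail; a fully self-contained proof would have to reproduce the capacitability argument, which is beyond the scope of this paper.
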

\begin{Cor}\label{corsel}
	Let $(\Omega,\mathcal{F},\mathbb{P})$ be complete and let $h(\omega,x)$ be a measurable function on $(\Omega \times \mathbb{R}^n,\mathcal{F}\otimes \mathcal{B}(\mathbb{R}^n))$, such that for a.e. $\omega$ the map $x \rightarrow h(\omega,x)$ is upper semi-continuous. Then given $U \subset \mathbb{R}^n$ compact, there exists a $\mathcal{F}$-measurable function $\beta$ such that
	\begin{equation}
		h(\omega,\beta(\omega))=\sup_{x \in \mathbb{R}^n}\{h(\omega,x):(\omega,x)\in \Omega \times U \}
	\end{equation}
	a.s.
	\begin{proof}
		As $A:=\Omega \times U \in \mathcal{F} \otimes \mathcal{B}(\mathbb{R}^{n}), \, g(\omega)=\sup_{x \in \mathbb{R}^n} \{h(\omega,x):(\omega,x)\in A\}$ is $\mathcal{F}$-measurable, $h$ is $\mathcal{F} \otimes \mathcal{B}(\mathbb{R}^{n})$-measurable and $B:=\{(\omega,x)\in \Omega\times U:h(\omega,x)=g(\omega)\}\in\mathcal{F} \otimes \mathcal{B}(\mathbb{R}^{n})$.
		
		Hence by Theorem \ref{kokoko} there is a $\mathcal{F}$-measurable function $\beta$ such that $\{(\omega,\beta(\omega))\in \Omega \times \mathbb{R}^{n}\} \subset B$ and $\{\omega : \beta(\omega)\in \mathbb{R}^{n}\}=\pi_{\Omega}(B)$. Thus, since $U$ is compact and $b \rightarrow h(\omega,b)$ is u.s.c. on $\Omega \setminus \mathcal{N}$ for a nullset $\mathcal{N}$, we get $B^{\omega}:=\{ b \in U : (\omega,b) \in B \}=\{ b \in U : h(\omega,b)=g(\omega) \} \ne \emptyset$ for all $\omega \in \Omega \setminus \mathcal{N}$, hence $\mathbb{P}(\pi_{\Omega}(B))=1$.
	\end{proof}
\end{Cor}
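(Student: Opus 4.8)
The plan is to combine the measurable-projection corollary (which makes the pointwise value function $g$ measurable) with the measurable-selection theorem (Theorem~\ref{kokoko}), using compactness of $U$ together with the assumed upper semi-continuity to ensure that the supremum is actually attained for a.e.\ $\omega$.

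First I would put $A:=\Omega\times U$, which belongs to $\mathcal{F}\otimes\mathcal{B}(\mathbb{R}^n)$ since $U$ is Borel. By the corollary to Theorem~\ref{mo}, the function
\[
g(\omega):=\sup_{x\in\mathbb{R}^n}\{h(\omega,x):(\omega,x)\in A\}
\]
is $\mathcal{F}$-measurable, hence $(\omega,x)\mapsto g(\omega)$ is jointly $\mathcal{F}\otimes\mathcal{B}(\mathbb{R}^n)$-measurable and so is $(\omega,x)\mapsto h(\omega,x)-g(\omega)$. Therefore the argmax set
\[
B:=\{(\omega,x)\in\Omega\times U:\, h(\omega,x)=g(\omega)\}
\]
lies in $\mathcal{F}\otimes\mathcal{B}(\mathbb{R}^n)$.

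Next I would apply Theorem~\ref{kokoko} to $B$, obtaining an $\mathcal{F}$-measurable $\beta$ valued in $\mathbb{R}^n\cup\{\infty\}$ with $(\omega,\beta(\omega))\in B$ on $\{\beta\in\mathbb{R}^n\}=\pi_\Omega(B)$; on that set $h(\omega,\beta(\omega))=g(\omega)$ by definition of $B$, which is precisely the desired identity. So the whole matter reduces to showing $\mathbb{P}(\pi_\Omega(B))=1$.

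That last point is where the hypotheses are actually used, and it is the only step I expect to require care. Fix a null set $\mathcal{N}$ off which $x\mapsto h(\omega,x)$ is upper semi-continuous; for $\omega\notin\mathcal{N}$, an u.s.c.\ function on the non-empty compact set $U$ attains its maximum there, so there is $b\in U$ with $h(\omega,b)=\sup_{x\in U}h(\omega,x)=g(\omega)$, i.e.\ the section $B^\omega=\{b\in U:h(\omega,b)=g(\omega)\}$ is non-empty and $\omega\in\pi_\Omega(B)$. Hence $\pi_\Omega(B)\supseteq\Omega\setminus\mathcal{N}$ has full measure, and if one wishes $\beta$ to be $\mathbb{R}^n$-valued everywhere one may simply redefine it on the remaining null set to be any fixed point of $U$. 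The main obstacle is thus purely one of bookkeeping: tracking the exceptional null set on which upper semi-continuity may fail, and confirming that $B$ is genuinely an element of the product $\sigma$-algebra, which is exactly what the preceding corollary delivers.
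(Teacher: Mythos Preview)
Your proposal is correct and follows essentially the same route as the paper: define $A=\Omega\times U$, use the projection corollary to get measurability of $g$, form the argmax set $B$, apply the measurable-selection theorem to $B$, and then use upper semi-continuity on the compact $U$ to show the sections $B^\omega$ are non-empty off a null set so that $\mathbb{P}(\pi_\Omega(B))=1$. Your exposition is in fact slightly more explicit in justifying why $B\in\mathcal{F}\otimes\mathcal{B}(\mathbb{R}^n)$ and in noting the optional redefinition of $\beta$ on the exceptional null set.
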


\section{Lyaponov-type stability of the state dynamic in connection to impulses}
In this section we will prove an essential property of the state dynamics. In particular, using the flow property of SFDEs, we show that the solution corresponding to the control $\nu_{1} \circ (t,u)\circ \nu_{2}$
converges to the solution corresponding to $\nu_{1} \circ (\hat{t},\hat{u})\circ \nu_{2}$ as $(t,u) \rightarrow (\hat{t},\hat{u})$. We start by stating the following well known lemma.
\begin{Lem}
	Let $p\in [1,\infty)$ and $x \in \mathbb{R}^{n}$, $F$ be functional Lipschitz and $\sup_{t}K_{t}\le k$ a.s. Then the solution $(*)$ satisfy
	\begin{equation}
		\mathbb{E}[ \sup_{t\in [0,T]} \norm{ X^{x}_{t}} ^p] \le C(1+ \norm{x}^p)
	\end{equation}
\end{Lem}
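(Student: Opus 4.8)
The plan is to convert the integral equation $(*)$ into a Gr\"onwall inequality for $\phi(u):=\mathbb{E}\big[\sup_{t\le u}\norm{X^{x}_{t}}^{p}\big]$. First I would reduce to the case $p\ge 2$: for $p\in[1,2)$, Jensen's inequality gives $\mathbb{E}\big[\sup_{t\le T}\norm{X^{x}_{t}}^{p}\big]\le \big(\mathbb{E}\big[\sup_{t\le T}\norm{X^{x}_{t}}^{2}\big]\big)^{p/2}$, and $(1+\norm{x}^{2})^{p/2}\le C(1+\norm{x}^{p})$, so the bound for $p=2$ implies the one for $p\in[1,2)$. From now on assume $p\ge 2$, and interpret the hypothesis as: $F_{1},F_{2}$ are functional Lipschitz with a common increasing process $K_{t}\le k$ a.s., and $F_{1}(\cdot,0),F_{2}(\cdot,0)$ lie in the space for which Theorem \ref{P1} applies, so in particular $\mathbb{E}\int_{0}^{T}\norm{F_{i}(s,\omega,0)}^{p}\,ds<\infty$.

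Since $B$ is continuous and both $t\mapsto\int_{0}^{t}F_{1}(X^{x})_{s}\,ds$ and $t\mapsto\int_{0}^{t}F_{2}(X^{x})_{s}\,dB_{s}$ have continuous versions, the solution $X^{x}$ of $(*)$ is a.s. continuous, hence $\sup_{t\le T}\norm{X^{x}_{t}}<\infty$ a.s. To keep the expectations a priori finite I would localize with $\tau_{N}:=\inf\{t\ge 0:\norm{X^{x}_{t}}\ge N\}\wedge T$, so that $\tau_{N}\uparrow T$ a.s. and $\sup_{t\le u}\norm{X^{x}_{t\wedge\tau_{N}}}\le N$. Applying $\norm{a+b+c}^{p}\le 3^{p-1}(\norm{a}^{p}+\norm{b}^{p}+\norm{c}^{p})$ to the stopped equation, taking $\sup_{t\le u}$ and then $\mathbb{E}$, bounding the drift term by Jensen's inequality ($\sup_{t\le u}\norm{\int_{0}^{t\wedge\tau_{N}}F_{1}(X^{x})_{s}\,ds}^{p}\le T^{p-1}\int_{0}^{u\wedge\tau_{N}}\norm{F_{1}(X^{x})_{s}}^{p}\,ds$) and the martingale term by the Burkholder--Davis--Gundy inequality followed by H\"older's inequality, one obtains
\begin{equation}
	\mathbb{E}\Big[\sup_{t\le u}\norm{X^{x}_{t\wedge\tau_{N}}}^{p}\Big]\le C(1+\norm{x}^{p})+C\,\mathbb{E}\int_{0}^{u\wedge\tau_{N}}\big(\norm{F_{1}(X^{x})_{s}}^{p}+\norm{F_{2}(X^{x})_{s}}^{p}\big)\,ds .
\end{equation}
The functional Lipschitz property together with $\sup_{t}K_{t}\le k$ gives the linear growth bound $\norm{F_{i}(X^{x})_{s}}\le\norm{F_{i}(s,\omega,0)}+k\sup_{r\le s}\norm{X^{x}_{r}}$, so the right-hand side is dominated by $C(1+\norm{x}^{p})+C\int_{0}^{u}\mathbb{E}\big[\sup_{r\le s}\norm{X^{x}_{r\wedge\tau_{N}}}^{p}\big]\,ds$ (the constant part using the $L^{p}$-integrability of $F_{i}(\cdot,0)$, and a routine manipulation to move the random upper limit inside). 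Writing $\phi_{N}(u):=\mathbb{E}\big[\sup_{t\le u}\norm{X^{x}_{t\wedge\tau_{N}}}^{p}\big]\le N^{p}<\infty$, this is $\phi_{N}(u)\le C(1+\norm{x}^{p})+C\int_{0}^{u}\phi_{N}(s)\,ds$, and Gr\"onwall's lemma yields $\phi_{N}(u)\le C(1+\norm{x}^{p})$ with a constant independent of $N$. Letting $N\to\infty$ and using monotone convergence (recall $\tau_{N}\uparrow T$ and the continuity of $X^{x}$) gives the claim.

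The only genuinely delicate point is this localization step: one must ensure the expectations are finite before invoking Gr\"onwall and that the resulting constant is $N$-independent, which is why the a.s. continuity of $X^{x}$ (forcing $\tau_{N}\uparrow T$) is used. Everything else --- the decomposition, Jensen, BDG and the final Gr\"onwall argument --- is routine, as is checking that the contribution of $F_{i}(\cdot,0)$ is $L^{p}$-integrable on $[0,T]$.
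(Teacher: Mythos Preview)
Your argument is correct and is the standard route to this estimate: reduce to $p\ge 2$ by Jensen, localize, split via the elementary inequality, control the drift by H\"older/Jensen and the stochastic integral by BDG followed by H\"older, use the functional Lipschitz bound to get linear growth, apply Gr\"onwall, and pass to the limit in the localization. The paper does not give a proof of this lemma at all --- it is introduced as ``the following well known lemma'' and merely stated --- so there is nothing to compare against; your write-up supplies exactly the standard justification the paper omits. Your caveat that some integrability of $F_i(\cdot,\omega,0)$ must be assumed is well taken: the lemma as stated in the paper suppresses this, but in the paper's concrete setting (Assumption~1 together with Remark~1) one has $\norm{F_i(t,\omega,0)}^p\le C$ a.s., which is more than enough.
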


\begin{Thm}\label{dyn}
	Under Assumption 1, our controlled SFDE defined via \ref{ha1} and \ref{ha2}, admits a unique solution for any $\nu \in \mathcal{A}$. Moreover, we have that
	\begin{equation}
		\sup_{\nu \in \mathcal{A}_{f}}\mathbb{E}[\sup_{t\in [0,T]}\norm{X^{x,\nu}_{t}}^{q}]<C_{T,q}
	\end{equation}for any $q\ge 1 $ and
	\begin{equation}\label{nora}
		\sup_{\nu_2 \in \mathcal{A}^{k}_{f}}\mathbb{E}[\sup_{s\in [\hat{t},T]}\norm{ X^{x,\nu_{1} \circ (t,u)\circ \nu_{2} }_{s}-X^{x,\nu_{1} \circ (\hat{t},\hat{u})\circ \nu_{2}}_{s}}^{4+2m}] \le C \norm{(t-\hat{t},u-\hat{u})}^{2+m}
	\end{equation}
	for $\nu_{1}\in \mathcal{A}_{f}$, where $m$ is the dimension of the control space $U$.

\begin{proof}

$X^{x,\nu,j}$ exists uniquely for each $j$ by Theorem \ref{P1} and thus so does the $\limsup$ as $\tau_{j} \rightarrow T$. 

For the second statement we note that $X^{x,\nu,j}=X^{x,\nu,j-1}$ on $[0,\tau_{j})$ and 
\begin{equation*}
	X^{x,\nu,j}_{t}=\Gamma(X^{x,\nu,j-1}_{\tau_{j}},\xi_{j})+\int_{\tau_{j}}^{t} a(s,\omega,\{X^{x,\nu,j}_{r}\}_{r \le s})ds+\int_{\tau_{j}}^{t}b(s,\omega,\{X^{x,\nu,j}_{r}\}_{r \le s})dB_{s},
\end{equation*}
on $[\tau_{j},T]$. Using Assumption \ref{A1} (ii) repeatedly we get,
\begin{equation}
	\begin{aligned}
		&\norm{ X^{x,\nu,j}_{t} }^{2} \le \norm{ X^{x,\nu,j}_{\tau_{j}} }^{2} + 2\int_{\tau_{j}}^{t}X^{x,\nu,j}_{s}dX^{x,\nu,j}_{s}+ \int_{\tau_{j}}^{t}d[X^{x,\nu,j},X^{x,\nu,j}]_{s}\\
		&\le C\, \vee \norm{ X^{x,\nu,j-1}_{\tau_{j}} }^{2} +   2\int_{\tau_{j}}^{t}X^{x,\nu,j}_{s}dX^{x,\nu,j}_{s}+ \int_{\tau_{j}}^{t}d[X^{x,\nu,j},X^{x,\nu,j}]_{s}.\\
	\end{aligned}
\end{equation}
Since if $\norm{ X^{x,\nu,j}_{t} }^{2}> C$ and $\norm{ X^{x,\nu,j}_{s} }^{2}\le C$ for some $s\in [0,t)$ then there is a largest $\hat{s}<t$ such that $\norm{ X^{x,\nu,j}_{\hat{s} }}^{2}\le C$, we know that there are no interventions that increase the magnitude of $\norm{ X^{x,\nu,j}_{s} }$ on $(\hat{s},t]$. Hence letting $\hat{s}=\sup\{s\ge0:X^{x,\nu,j}_{s} \le C\} \vee 0$, we use induction to obtain,
\begin{equation}
	\begin{aligned}
		&\le C \,  + \Big(2\sum_{i} \int_{\tau_{i-1}\vee \hat{s}}^{\tau_{i}\wedge t}X^{x,\nu,i}_{s}dX^{x,\nu,i}_{s}+\sum_{i} \int_{\tau_{i-1}\vee \hat{s}}^{\tau_{i}\wedge t}d[X^{x,\nu,i},X^{x,\nu,i}]_{s}\Big)\\
		&+2\int_{\tau_{j}\vee \hat{s}}^{t}X^{x,\nu,j}_{s}dX^{x,\nu,j}_{s}+ \int_{\tau_{j}\vee \hat{s}}^{t}d[X^{x,\nu,j},X^{x,\nu,j}]_{s}
	\end{aligned}
\end{equation}
By raising this by a power of $\frac{q}{2}$ for $q\ge 2$, taking supremum followed by expectation and then using Burkholder-Davis-Gundy inequality as well as some elementary estimates we get
\newpage
\begin{equation}
	\begin{aligned}
		\mathbb{E}\Big[\sup_{t\in [0,s] } \norm{X^{x,\nu,j}_{t} }^{q}\Big] \le C + \int_{0}^{t}\mathbb{E}\Big[\sup_{r \in [0,s]} \norm{ X^{x,\nu,j}_{r} }^{q}\Big] ds.
	\end{aligned}
\end{equation}
Since $\nu$ is arbitrary and any estimate along the way is independent of $\nu$, the statement follows from Grönvall's inequality and then taking the limit in $j$. To obtain the bound for $q \in [1,2)$ one simply applies Jensens inequality. 

For the last statement, assume that $t < \hat{t}$ and consider,
\begin{equation*}
	\begin{aligned}
		&\mathbb{E}\sup_{\hat{t} \le s\le r}\norm{ X^{x,\nu_{1} \circ (t,u) \circ \nu_{2}}_{s}-X^{x,\nu_{1} \circ (\hat{t},\hat{u}) \circ \nu_{2}}_{s}}^{p}\\
		&\le C\Big[\mathbb{E}\norm{X^{x,\nu_{1} \circ (t,u) \circ \nu_{2}}_{\hat{t}}-X^{x,\nu_{1} \circ (\hat{t},\hat{u}) \circ \nu_{2}}_{\hat{t}}}^{p}\\
		&+\mathbb{E}\Big(\int_{\hat{t}}^{r}\mid \mid a(v,\omega,\{X^{x,\nu_{1} \circ (t,u) \circ \nu_{2}}_{z}\}_{z \le v})-a(v,\omega,\{X^{x,\nu_{1} \circ (\hat{t},\hat{u}) \circ \nu_{2}}_{z}\}_{z \le v})\mid \mid dv\Big)^{p}\\
		&+\mathbb{E}\sup_{\hat{t}\le s\le r}\mid \mid\int_{\hat{t}}^{s} b(v,\omega,\{X^{x,\nu_{1} \circ (t,u) \circ \nu_{2}}_{z}\}_{z \le v})-b(v,\omega,\{X^{x,\nu_{1} \circ (\hat{t},\hat{u}) \circ \nu_{2}}_{z}\}_{z \le v})dB_{v}\mid\mid^{p}\Big]
\end{aligned}
\end{equation*}
		We start by estimating the first term, which could potentially be large due to $v_{2}$ containing times less than or equal to $\hat{t}$.
\begin{equation*}
	\begin{aligned}
		&\mathbb{E}\norm{X^{x,\nu_{1} \circ (t,u) \circ \nu_{2}}_{\hat{t}}-X^{x,\nu_{1} \circ (\hat{t},\hat{u}) \circ \nu_{2}}_{\hat{t}}}^{p}\\
		&\leq\mathbb{E} \norm{ \Gamma(X^{x,\nu_1\circ(t,u)\circ\nu_2,N_1+N_2-1}_{\tau_{N_2}},\xi_{N_2})-\Gamma(X^{x,\nu_1\circ(\hat t,\hat u)\circ\nu_2,N_1+N_2-1}_{\hat t},\xi_{N_2})}^{p}\\
		&+\mathbb{E}\norm{X^{x,\nu_1\circ(t,u)\circ\nu_2,N_1+l}_{\hat t}-X^{x,\nu_1\circ(t,u)\circ\nu_2,N_1+N_2}_{\tau_{N_2}}}^{p}.\\
\end{aligned}
\end{equation*}
Using Lipschitz condition on $\Gamma$ and repeating this we get,
\begin{equation*}
\begin{aligned}
		&\mathbb{E}\norm{X^{x,\nu_{1} \circ (t,u) \circ \nu_{2}}_{\hat t}-X^{x,\nu_{1} \circ (\hat{t},\hat{u}) \circ \nu_{2}}_{\hat{t}}}^{p}\\
		&\leq\mathbb{E} \norm{\Gamma(X^{x,\nu_{1} \circ (t,u) \circ \nu_{2},N_1-1}_{t},u)-\Gamma(X^{x,\nu_{1} \circ (\hat{t},\hat{u}) \circ \nu_{2},N_1-1}_{\hat{t}},\hat u)}^{p}\\
		&+\sum_{j=0}^{N_{2}-1} \mathbb{E}\norm{X^{x,\nu_1\circ(t,u)\circ\nu_2,N_1+j}_{\tau_{j+1}}-X^{x,\nu_1\circ(t,u)\circ\nu_2,N_1+j}_{\tau_j}}^{p}
	\end{aligned}
\end{equation*}
Exploiting the flow property of the state we obtain,
\begin{equation*}
	\begin{aligned}
		&\mathbb{E}\norm{X^{x,\nu_{1} \circ (t,u) \circ \nu_{2}}_{\hat t}-X^{x,\nu_{1} \circ (\hat{t},\hat{u}) \circ \nu_{2}}_{\hat{t}}}^{p}
		\leq \mathbb{E}\norm{(X^{x,\nu_{1} \circ (t,u) \circ \nu_{2}}_{t}-X^{x,\nu_{1} \circ (\hat{t},\hat{u}) \circ \nu_{2}}_{\hat{t}},u-\hat u)}^{p}\\
		&+\sum_{j=0}^{N_{2}-1}\mathbb{E} \mid\mid\int_{\tau_{j}}^{\tau_{j+1}} a(v,\omega,\{X^{x,\nu_{1}\circ (\hat{t},\hat{u})\circ \nu_{2}}_{z}\}_{z \le v})dv\\
		&+\int_{\tau_{j}}^{\tau_{j+1}}  b(v,\omega,\{X^{x,\nu_{1}\circ (\hat{t},\hat{u})\circ \nu_{2}}\}_{z \le v})dB_{v}\mid\mid^{p}\\
		&\leq\mathbb{E}\mid\mid(\int_{t}^{\hat t} a(v,\omega,\{X^{x,\nu_{1}\circ (\hat{t},\hat{u})\circ \nu_{2}}_{z}\}_{z \le v})dv\\
		&+\int_{t}^{\hat t}  b(v,\omega,\{X^{x,\nu_{1}\circ (\hat{t},\hat{u})\circ \nu_{2}}\}_{z \le v})dB_{v},u-\hat u)\mid\mid^{p} \\
		&+\sum_{j=0}^{N_{2}-1}C\mathbb{E}\Big( \int_{t}^{\hat t}\norm{ a(v,\omega,\{X^{x,\nu_{1}\circ (\hat{t},\hat{u})\circ \nu_{2}}_{z}\}_{z \le v})}dv\Big)^{p}\\
		&+\sum_{j=0}^{N_{2}-1}C\mathbb{E}\norm{\int_{\tau_{j}}^{\tau_{j+1}}  b(v,\omega,\{X^{x,\nu_{1}\circ (\hat{t},\hat{u})\circ \nu_{2}}\}_{z \le v})dB_{v}}^{p}\\
		&\leq\mathbb{E}\mid\mid(\int_{t}^{\hat t} a(v,\omega,\{X^{x,\nu_{1}\circ (\hat{t},\hat{u})\circ \nu_{2}}_{z}\}_{z \le v})dv\\
		&+\int_{t}^{\hat t}  b(v,\omega,\{X^{x,\nu_{1}\circ (\hat{t},\hat{u})\circ \nu_{2}}\}_{z \le v})dB_{v},u-\hat u)\mid\mid^{p} \\
		&+C\sum_{j=0}^{N_{2}-1}\mathbb{E}\Big( (\hat t -t)^{p-1}\int_{t}^{\hat t}\norm{ a(v,\omega,\{X^{x,\nu_{1}\circ (\hat{t},\hat{u})\circ \nu_{2}}_{z}\}_{z \le v})}^{p}dv\Big)\\
		&+C\sum_{j=0}^{N_{2}-1}\mathbb{E}\sup_{\tau_{j}\le s\le \tau_{j+1}}\norm{\int_{\tau_{j}}^{s}  b(v,\omega,\{X^{x,\nu_{1}\circ (\hat{t},\hat{u})\circ \nu_{2}}\}_{z \le v})dB_{v}}^{p}
\end{aligned}
\end{equation*}
Using Burkholder-Davis-Grundy and the assumptions on the coefficients we get
\begin{equation*}
\begin{aligned}
		&\le\mathbb{E}\Big(C(\int_{t}^{\hat t} a(v,\omega,\{X^{x,\nu_{1}\circ (\hat{t},\hat{u})\circ \nu_{2}}_{z}\}_{z \le v})dv)^2\\
		&+C(\int_{t}^{\hat t}  b(v,\omega,\{X^{x,\nu_{1}\circ (\hat{t},\hat{u})\circ \nu_{2}}\}_{z \le v})dB_{v})^2+(u-\hat u)^2\Big)^\frac{p}{2}\\
		&+C\sum_{j=0}^{N_{2}-1}(\hat{t}-t)^{p}\mathbb{E}C(\sup_{t}\norm{X^{x,\nu_{1} \circ (\hat t,\hat u) \circ \nu_{2}}_{t}}^{p}+1)\\
		&+C\sum_{j=0}^{N_{2}-1}\mathbb{E}C_{p}\norm{\int_{t}^{\hat t}  b(v,\omega,\{X^{x,\nu_{1}\circ (\hat{t},\hat{u})\circ \nu_{2}}\}_{z \le v})^2dv}^\frac{p}{2}\\
	\end{aligned}
\end{equation*}

\begin{equation*}
	\begin{aligned}
		&\le\mathbb{E}\Big(\hat{C}(\int_{t}^{\hat t} a(v,\omega,\{X^{x,\nu_{1}\circ (\hat{t},\hat{u})\circ \nu_{2}}_{z}\}_{z \le v})dv)^{p}\\
		&+\hat{C}(\int_{t}^{\hat t}  b(v,\omega,\{X^{x,\nu_{1}\circ (\hat{t},\hat{u})\circ \nu_{2}}\}_{z \le v})dB_{v})^{p}\\
		&+\hat{C}(u-\hat u)^{p}\Big)+C^2\sum_{j=0}^{N_{2}-1}(\hat{t}-t)^{p}\mathbb{E}(\sup_{t}\norm{X^{x,\nu_{1} \circ (\hat t,\hat u) \circ \nu_{2}}_{t}}^{p}+1)\\
		&+CC_{p}\sum_{j=0}^{N_{2}-1}\mathbb{E}(\hat t-t)^{\frac{p}{2}-1}\int_{t}^{\hat t}  b(v,\omega,\{X^{x,\nu_{1}\circ (\hat{t},\hat{u})\circ \nu_{2}}\}_{z \le v})^{p}dv\\
		&\le C(CN_2-1+\hat C)(\hat{t}-t)^{p}\mathbb{E}(\sup_{t}\norm{X^{x,\nu_{1} \circ (\hat t,\hat u) \circ \nu_{2}}_{t}}^{p}+1)\\
		&+C(CC_{p}N_2-1+\hat C)(\hat{t}-t)^\frac{p}{2}\mathbb{E}(\sup_{t}\norm{X^{x,\nu_{1} \circ (\hat t,\hat u) \circ \nu_{2}}_{t}}^{p}+1)+\hat{C}(u-\hat u)^{p}\\
		&\le A(\hat{t}-t)^\frac{p}{2}+B(u-\hat{u})^\frac{p}{2}\le \max{\{A,B\}}\norm{(t-\hat{t},u-\hat{u})}^\frac{p}{2},\\
\end{aligned}
\end{equation*}
where $A$ and $B$ are finite due to the previous statement. Moving on to the remaining terms we have
\begin{equation*}
	\begin{aligned}
		&+\mathbb{E}\Big(\int_{\hat{t}}^{r}\mid \mid a(v,\omega,\{X^{x,\nu_{1} \circ (t,u) \circ \nu_{2}}_{z}\}_{z \le v})-a(v,\omega,\{X^{x,\nu_{1} \circ (\hat{t},\hat{u}) \circ \nu_{2}}_{z}\}_{z \le v})\mid \mid dv\Big)^{p}\\
		&+\mathbb{E}\sup_{\hat{t}\le s\le r}\mid \mid\int_{\hat{t}}^{s} b(v,\omega,\{X^{x,\nu_{1} \circ (t,u) \circ \nu_{2}}_{z}\}_{z \le v})-b(v,\omega,\{X^{x,\nu_{1} \circ (\hat{t},\hat{u}) \circ \nu_{2}}_{z}\}_{z \le v})dB_{v}\mid\mid^{p}\\
		&\le\mathbb{E}\Big(K_{2}\int_{0}^{r} \norm{X^{x,\nu_{1} \circ (t,u) \circ \nu_{2}}_{v}-X^{x,\nu_{1} \circ (\hat{t},\hat{u}) \circ \nu_{2}}_{v}} dv\Big)^{p}\\
		&+C_{p}\mathbb{E}\norm{\int_{\hat{t}}^{r} (b(v,\omega,\{X^{x,\nu_{1} \circ (t,u) \circ \nu_{2}}_{z}\}_{z \le v})-b(v,\omega,\{X^{x,\nu_{1} \circ (\hat{t},\hat{u}) \circ \nu_{2}}_{z}\}_{z \le v}))^2dv}^\frac{p}{2}\\
		&\le\mathbb{E}\cdot r^{p-1} \cdot K_{2}^2 \int_{0}^{r}  \norm{ X^{x,\nu_{1} \circ (t,u) \circ \nu_{2}}_{v}- X^{x,\nu_{1} \circ (\hat{t},\hat{u}) \circ \nu_{2}}_{v}}^{p}dv\\
		&+C_{p}\cdot K_{4}\mathbb{E}\int_{0}^{r} \norm{ X^{x,\nu_{1} \circ (t,u) \circ \nu_{2}}_{v}- X^{x,\nu_{1} \circ (\hat{t},\hat{u}) \circ \nu_{2}}_{v}}^{p} dv \\
		&\le(r^{p-1}\cdot K_{2}^2+C_{p} \cdot K_{4})\int_{0}^{r}\mathbb{E}\sup_{\hat{t}\le s \le v} \norm{ X^{x,\nu_{1} \circ (t,u) \circ \nu_{2}}_{v}- X^{x,\nu_{1} \circ (\hat{t},\hat{u}) \circ \nu_{2}}_{v}}^{p} dv.
\end{aligned}
\end{equation*}
Which we justify by similar reasoning as above. Letting $p=4+2m$, the latter estimate allows us to use Grönwall's lemma and the former give us the second,
\begin{equation*}
	\begin{aligned}
		&\mathbb{E}\sup_{s\le r}\norm{ X^{x,\nu_{1} \circ (t,u) \circ \nu_{2}}_{s}-X^{x,\nu_{1} \circ (\hat{t},\hat{u}) \circ \nu_{2}}_{s}}^{4+2m}\\
		&\le \mathbb{E} \norm{X^{x,\nu_{1} \circ (t,u)\circ\nu_{2}}_{\hat{t}}-X^{x,\nu_{1} \circ (\hat{t},\hat{u}) \circ \nu_{2}}_{\hat{t}}}^{4+2m}\cdot e^{CT}\\
		&\le C \norm{(t-\hat{t},u-\hat{u})}^{2+m}
	\end{aligned}
\end{equation*}

\end{proof}
\end{Thm}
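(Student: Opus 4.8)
The plan is to split off the interval $[\hat t,T]$ from everything that happens before it: on $[\hat t,T]$ the two controlled trajectories $X^{(1)}:=X^{x,\nu_{1}\circ(t,u)\circ\nu_{2}}$ and $X^{(2)}:=X^{x,\nu_{1}\circ(\hat t,\hat u)\circ\nu_{2}}$ solve SFDEs with the \emph{same} coefficients $a,b$ and, thanks to the $\vee$ in the composition, receive the \emph{same} impulses from the part of $\nu_{2}$ falling in $[\hat t,T]$. A Gr\"onwall argument then reduces the whole estimate to a bound on the discrepancy of the two trajectories at the single time $\hat t$, and that discrepancy is controlled via the non-expansiveness of $\Gamma$ in \eqref{gam} together with the flow property of Theorem~\ref{P1}.

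Concretely, assume without loss of generality $t<\hat t$ and set $p=4+2m$. On $[\hat t,T]$ I would subtract the two integral equations, take $\sup_{\hat t\le s\le r}$, raise to the power $p$ and take expectations; the drift term is handled with H\"older and the integrated Lipschitz bound for $a$, the martingale term with Burkholder--Davis--Gundy and the integrated Lipschitz bound for $b$ (both from Assumption~\ref{A1}(i)), while the impulses of $\nu_{2}$ falling in $(\hat t,T]$ contribute nothing since each $\Gamma(\cdot,\xi)$ is non-expansive by \eqref{gam}, so a jump can only bring the two trajectories closer together. This yields, for $r\in[\hat t,T]$,
\begin{equation*}
	\mathbb{E}\Big[\sup_{\hat t\le s\le r}\norm{X^{(1)}_{s}-X^{(2)}_{s}}^{p}\Big]\le C\,\mathbb{E}\big[\norm{X^{(1)}_{\hat t}-X^{(2)}_{\hat t}}^{p}\big]+C\int_{\hat t}^{r}\mathbb{E}\Big[\sup_{\hat t\le z\le v}\norm{X^{(1)}_{z}-X^{(2)}_{z}}^{p}\Big]\,dv,
\end{equation*}
so Gr\"onwall's lemma gives $\mathbb{E}\sup_{\hat t\le s\le T}\norm{X^{(1)}_{s}-X^{(2)}_{s}}^{p}\le Ce^{CT}\,\mathbb{E}\norm{X^{(1)}_{\hat t}-X^{(2)}_{\hat t}}^{p}$.

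It remains to estimate $\mathbb{E}\norm{X^{(1)}_{\hat t}-X^{(2)}_{\hat t}}^{p}$, which is the delicate point: since $\nu_{2}\in\mathcal{A}^{k}_{f}$ it carries at most $k+1$ impulses, and some of their times may be $\le\hat t$; for $X^{(1)}$ these are already in force, whereas for $X^{(2)}$ they have been pushed up to $\hat t$ by the composition. I would peel these impulses off one at a time: expressing the state just after the differing impulse as an iterated composition of $\Gamma$'s, the later magnitudes $\xi_{j,2}$ are common to both trajectories and drop out by the contraction property \eqref{gam}, leaving a term of the form $\norm{(X^{x,\nu_{1}}_{\sigma}-X^{x,\nu_{1}}_{\hat\sigma},\,u-\hat u)}$ (with $\sigma=\tau_{I,1}\vee t$, $\hat\sigma=\tau_{I,1}\vee\hat t$, so $|\sigma-\hat\sigma|\le|\hat t-t|$) plus finitely many drift and stochastic integrals of $a$ and $b$ over the short interval $[t,\hat t]$, the flow property of Theorem~\ref{P1} being used to identify a trajectory restarted after an impulse with the SFDE started from the impulsed value. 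Raising to the power $p$, taking expectations, and applying H\"older, Burkholder--Davis--Gundy, the growth bounds following Assumption~\ref{A1}, the uniform moment estimate $\sup_{\nu\in\mathcal{A}_{f}}\mathbb{E}\sup_{t\in[0,T]}\norm{X^{x,\nu}_{t}}^{p}<\infty$ from the second statement of the theorem, and the boundedness of $U$, each such integral is $O(|\hat t-t|^{p/2})$ and the $\Gamma$-term is $O(|\hat t-t|^{p/2}+|u-\hat u|^{p/2})$, with constants independent of $\nu_{2}$ because the number of impulses is bounded by $k+1$. Hence $\mathbb{E}\norm{X^{(1)}_{\hat t}-X^{(2)}_{\hat t}}^{p}\le C\,\norm{(t-\hat t,u-\hat u)}^{p/2}$, and combining this with the Gr\"onwall estimate and $p/2=m+2$ gives \eqref{nora}.

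The main obstacle is exactly this clustering of the tail impulses of $\nu_{2}$ inside $[t,\hat t]$ for the perturbed trajectory: one must show the estimate is ``Lyapunov stable'' under composition with $\nu_{2}$, i.e.\ that appending a bounded number of impulses after a small perturbation cannot amplify it to an $O(1)$ error. This is precisely where the non-expansiveness of $\Gamma$ in \eqref{gam} and the flow property of Theorem~\ref{P1} are indispensable; without them the appended control could spread a small perturbation into an error of order one. A secondary technical nuisance is keeping all constants uniform over $\nu_{2}\in\mathcal{A}^{k}_{f}$, which is handled by the a priori bound on the number of impulses together with the moment estimate in the theorem's second claim, and the separate bookkeeping for the cases $\hat t<t$ and $\tau_{I,1}>t$, which are entirely analogous.
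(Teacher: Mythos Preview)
Your proposal is correct and follows essentially the same route as the paper: split at $\hat t$, run a Gr\"onwall argument on $[\hat t,T]$ (using the integrated Lipschitz bounds on $a,b$ and BDG) to reduce everything to $\mathbb{E}\norm{X^{(1)}_{\hat t}-X^{(2)}_{\hat t}}^{p}$, then peel off the at most $k+1$ impulses of $\nu_2$ clustered in $[t,\hat t]$ via the contraction property \eqref{gam} of $\Gamma$ and the flow property, leaving only short drift/diffusion integrals over $[t,\hat t]$ that are bounded by $C\norm{(t-\hat t,u-\hat u)}^{p/2}$ via H\"older, BDG and the moment bound from the second claim. Your explicit observation that the common $\nu_2$-impulses in $(\hat t,T]$ can only contract the difference (so they drop out of the Gr\"onwall step) is a point the paper's integral decomposition leaves implicit.
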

\section{Verification theorem}
In this section we present the main result of the paper which is the characterisation of an optimal control to Problem \ref{vafan}. Theorems of this kind are in general known as verification theorems. Such theorems are usually based on a large set of assumptions, often including the existence of a solution to a certain equation and some additional hypotheses regarding existence of an optimal control. 

In contrast to the general concept of a verification theorem, our theorem, merely assumes the existence of solutions to a certain family of equations and as a direct consequence we also obtain existence of an optimal control. We pay a price however,  this family also needs to be interrelated.

We will follow the approach taken in $\cite{DJ}$ which is roughly the following.

By assuming existence of a verification family, which is a family of interconntected Snell envelopes, we will be able to recreate the performance functional by using Theorem $\ref{Snell}$ iteratively. Due to Theorem $\ref{Snell}$(ii) this scheme will also provide us with the optimal control.

To prove that such a family exists, we will, as in $\cite{DJ}$, define a sequence of verification families and prove that the limit exhibits the required properties. The reason for proving the verification theorem ahead of the existence is that we will use a constrained version of it in order to prove the latter.

We start with the definition of a verification family.
\begin{Def}\label{lo}
	We say that a family of continuous supermartingales $\{Y^{\nu} \}_{\nu \in \mathcal{A}_{f}}$ is a verification family if it satisfies:
	
	\begin{equation}
		\begin{aligned}
			&(i)\,Y_{s}^{\nu}=\esssup_{\tau \in \mathcal{T}_{s}} \mathbb{E}[\int_{s}^{\tau}f(t,X^{\nu}_{t})dt+\sup_{u\in U}\{Y^{\nu\circ (\tau,u)}_{\tau}- \ell(X^{\nu}_{\tau},u,\tau)\}\mid \mathcal{F}_{s}]\\
			&(ii)\,\sup_{u\in U}\{Y^{\nu\circ (s,u)}_{s}-\ell(X^{\nu}_{s},u,s)\} \text{ is a.s. continuous and adapted in $s$}.\\
			&(iii)\, Y^{\nu\circ(\tau,u)}_\tau-\ell(X^\nu_\tau,u,\tau)\text{ is upper semi-continous in $u$ for all $\tau$}
		\end{aligned}
	\end{equation}
	
\end{Def}
The following Lemma, from $\cite{DJ}$, simplifies the proof of the verification theorem.
\begin{Lem}\label{tt}
	The supremum in Problem \ref{vafan} over $\mathcal{A}$ and $\mathcal{A}_{f}$  coincide.
	\begin{proof}
		In the spirit of $\cite{DJ}$ we let $(\tau_{i}$, $\xi_{i})_{i\in \mathbb{N}}\in \mathcal{A} \setminus \mathcal{A}_{f}$ and consider the set $B=\{\omega ; \tau_{i}<T \ \text{for all}\ i\}$. Since $(\tau_{i}$, $\xi_{i})_{i\in \mathbb{N}} \in \mathcal{A} \setminus \mathcal{A}_{f}$, we have $\mathbb{P}(B)>0$. Hence,
		\begin{equation}
			\begin{aligned}
				&J((\tau_{i}, \xi_{i})_{i\in \mathbb{N}})\le \mathbb{E}[\int_{0}^{T}\sup_{\hat{\nu}}f(s,X^{\hat{\nu}}_{s})ds]\\
				&-\mathbb{E}\Big[(\sum_{i} \ell(X^{\nu}_{\tau_{i}},\xi_{i},\tau_{i}))\chi_{B}\\
				&+(\sum_{i} \ell(X^{\nu}_{\tau_{i}},\xi_{i},\tau_{i}))\chi_{\Omega \setminus B}\Big]=-\infty,
			\end{aligned}
		\end{equation}
		due to Theorem $\ref{dyn}$, Assumption $2$ and  $\ell\ge K_{6}>0$.
	\end{proof}
	
\end{Lem}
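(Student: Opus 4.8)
The plan is to establish the two inequalities separately. Since $\mathcal{A}_f\subset\mathcal{A}$, the bound $\sup_{\nu\in\mathcal{A}_f}J(\nu)\le\sup_{\nu\in\mathcal{A}}J(\nu)$ is immediate. For the reverse inequality it suffices to show that $J(\nu)=-\infty$ for every $\nu\in\mathcal{A}\setminus\mathcal{A}_f$: granting this, $J(\nu)\le\sup_{\mathcal{A}_f}J$ holds for each $\nu\in\mathcal{A}$ — trivially when $\nu\in\mathcal{A}_f$, and because the right-hand side is $>-\infty$ when $\nu\notin\mathcal{A}_f$ — which yields $\sup_{\mathcal{A}}J\le\sup_{\mathcal{A}_f}J$. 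That $\sup_{\mathcal{A}_f}J>-\infty$ I would note by pointing to any finite control carrying no impulse before $T$, for which $J$ reduces to the expected running reward of the uncontrolled state, a finite quantity by the growth of $f$ and Theorem~\ref{dyn}.

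So fix $\nu=(\tau_i,\xi_i)_i\in\mathcal{A}\setminus\mathcal{A}_f$ and put $B=\{\omega:\tau_i(\omega)<T\text{ for all }i\}$; by the definition of $\mathcal{A}_f$ we have $\mathbb{P}(B)>0$. On $B$ infinitely many impulses act strictly before $T$, and since $\ell>K_6>0$ by Assumption~2, the nonnegative series $\sum_i\ell(X^\nu_{\tau_i},\xi_i,\tau_i)$ is identically $+\infty$ there. Next I would check that the running reward is bounded above in expectation: from $\abs{f(s,x)}\le C(1+\norm{x})$ (the Remark following Assumption~2) together with a bound $\mathbb{E}\int_0^T\norm{X^\nu_s}\,ds\le C_T$ one obtains that $\int_0^T f(s,X^\nu_s)\,ds$ is integrable, in particular a.s.\ finite. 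Writing $Z=\int_0^T f(s,X^\nu_s)\,ds-\sum_i\ell(X^\nu_{\tau_i},\xi_i,\tau_i)$, we then have $Z=-\infty$ on $B$, while $Z^+\le\int_0^T\abs{f(s,X^\nu_s)}\,ds\in L^1$; since $\mathbb{P}(B)>0$, the expectation $J(\nu)=\mathbb{E}[Z]$ is well defined and equals $-\infty$. (One can equally argue $J(\nu)\le\mathbb{E}[\int_0^T f^+(s,X^\nu_s)\,ds]-\mathbb{E}[\sum_i\ell\,\chi_B]$; I would avoid introducing $\sup_{\hat\nu}f(s,X^{\hat\nu}_s)$ unless its measurability is separately addressed.)

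The one genuinely delicate point — and the main obstacle — is the moment estimate $\mathbb{E}\int_0^T\norm{X^\nu_s}\,ds\le C_T$ for $\nu\in\mathcal{A}\setminus\mathcal{A}_f$, since Theorem~\ref{dyn} records the uniform bound only over $\mathcal{A}_f$. I would extract it from the proof of Theorem~\ref{dyn}: the Grönwall estimate there gives $\sup_j\mathbb{E}[\sup_{t\le T}\norm{X^{x,\nu,j}_t}^q]\le C_{T,q}$ with a constant independent of both $\nu$ and $j$. Because $\nu$ is admissible, $\tau_j\to T$ a.s., so for each $t<T$ one has $X^{x,\nu,j}_t=X^{x,\nu}_t$ for all $j$ large; Fatou's lemma then yields $\mathbb{E}\norm{X^{x,\nu}_t}^q\le C_{T,q}$ for a.e.\ $t$, and Tonelli's theorem upgrades this to $\mathbb{E}\int_0^T\norm{X^\nu_s}\,ds\le C_T$. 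Feeding this back closes the argument: $J(\nu)=-\infty$ for every $\nu\in\mathcal{A}\setminus\mathcal{A}_f$, and hence the suprema over $\mathcal{A}$ and $\mathcal{A}_f$ coincide.
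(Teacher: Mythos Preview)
Your argument is correct and follows the paper's route: for $\nu\in\mathcal{A}\setminus\mathcal{A}_f$ the set $B$ has positive measure, the cost series diverges there because $\ell\ge K_6>0$, and the running reward is integrable, forcing $J(\nu)=-\infty$.

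Where you differ from the paper is in the handling of the running-reward bound. The paper majorises by $\mathbb{E}[\int_0^T\sup_{\hat\nu}f(s,X^{\hat\nu}_s)\,ds]$ and invokes Theorem~\ref{dyn}; you rightly flag that Theorem~\ref{dyn} records the moment bound only over $\mathcal{A}_f$, and that the pointwise supremum over $\hat\nu$ raises a measurability question. Your fix---pulling the $j$-uniform Gr\"onwall bound out of the proof of Theorem~\ref{dyn} and passing to $X^{x,\nu}$ via Fatou and the eventual constancy of $X^{x,\nu,j}_t$ for $t<T$---is clean and closes the gap without introducing the supremum. This is a genuine improvement in rigour over the paper's sketch, at the cost of one extra paragraph; the underlying idea is the same.
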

We are now ready to state and prove our main result.
\begin{Thm}
	Suppose there exists a verification family. Then it satisfies
	
	\begin{equation}
		Y_{0}=\sup_{u\in \mathcal{A}}J(u),
	\end{equation}
	is unique and defines a solution to Problem \ref{vafan} via the control
	\begin{equation}\label{nko}
		\begin{aligned}
			\tau_{0}^{*}&:=0  \\
			\tau_{j}^{*}&:=\inf\Big\{s \ge \tau_{j-1}^{*}:Y^{\tau_{1}^{*},\ldots,\tau_{j-1}^{*}:\xi_{1}^{*},\ldots,\xi_{j-1}^{*}}_{s}=\\
			&\sup_{u\in U}\{ Y^{\tau_{1}^{*},\ldots,\tau_{j-1}^{*},s:\xi_{1}^{*},\ldots,\xi_{j-1}^{*},u}_{s}-\ell(X^{\tau_{1}^{*},\ldots,\tau_{j-1}^{*}:\xi_{1}^{*},\ldots,\xi_{j-1}^{*}}_{s},u,s)\} \Big\} \wedge T
		\end{aligned}
	\end{equation}
	where $\xi_{j}^{*}$ is a measurable selection of
	\begin{equation}\label{nkj}
		\xi_{j}^{*} \in \argmax_{\{u\in U\}} \{Y^{\tau_{1}^{*},\ldots,\tau_{i-1}^{*},\tau_{j}^{*}:\xi_{1}^{*},\ldots,\xi_{j-1}^{*},u}_{\tau_{j}^{*}}-\ell(X^{\tau_{1}^{*},\ldots,\tau_{i-1}^{*}:\xi_{1}^{*},\ldots,\xi_{j-1}^{*}}_{\tau_{j}^{*}},u,\tau_{j}^{*})\}
	\end{equation}
	\begin{proof}
		We start by noting that the recursion $(i)$ in $(4.1)$ also hold if we replace $s$ by a stopping time and that the supremum is attained. This follows from Definition \ref{lo} (i), Theorem $\ref{Snell}$(ii) and Corollary $\ref{corsel}$. Hence, for some one-step optimal control of one impulse, $(\tau^{*},\xi^{*})$, we have
		\begin{equation}
			Y_{\theta}^{\nu}= \mathbb{E}[\int_{\theta}^{\tau^{*}}f(t,X^{\nu}_{t})dt+Y^{\nu\circ (\tau^{*},\xi^{*})}_{\tau^{*}}- \ell(X^{\nu}_{\tau^{*}},\xi^{*},\tau^{*})\mid \mathcal{F}_{\theta}].
		\end{equation}
		Since $\nu$ was arbitrary we have, starting at $0$,
		\begin{equation}
			\begin{aligned}
				Y_{0}&=\mathbb{E}[\int_{0}^{\tau^{*}_{1}}f(t,X_{t})dt+Y^{(\tau^{*}_{1},\xi^{*}_{1})}_{\tau^{*}_{1}}- \ell(X_{\tau^{*}_{1}},\xi^{*}_{1},\tau^{*}_{1})].
			\end{aligned}
		\end{equation}
		Moreover, for any $j$ we have
		\begin{equation}
			\begin{aligned}
				&Y^{(\tau^{*}_{1},\xi^{*}_{1})\circ \ldots \circ (\tau^{*}_{j},\xi^{*}_{j})}_{\tau^{*}_{j}}\\
				&=\mathbb{E}[\int_{\tau^{*}_{j}}^{\tau^{*}_{j+1}}f(t,X^{(\tau^{*}_{1},\xi^{*}_{1})\circ \ldots \circ (\tau^{*}_{j},\xi^{*}_{j})}_{t})dt+Y^{(\tau^{*}_{1},\xi^{*}_{1})\circ \ldots \circ (\tau^{*}_{j+1},\xi^{*}_{j+1})}_{\tau^{*}_{j+1}}\\
				&-\ell(X^{(\tau^{*}_{1},\xi^{*}_{1})\circ \ldots \circ (\tau^{*}_{j},\xi^{*}_{j})}_{\tau^{*}_{j+1}},\xi^{*}_{j+1},\tau^{*}_{j+1})\mid \mathcal{F}_{\tau_{j}^*}].
			\end{aligned}
		\end{equation}
		Hence simply by inserting the latter into the former we obtain,
		\begin{equation}
			\begin{aligned}
				Y_{0}=&\mathbb{E}[\int_{0}^{\tau^{*}_{N}}f(t,X^{(\tau^{*}_{1},\xi^{*}_{1})\circ \ldots \circ (\tau^{*}_{N-1},\xi^{*}_{N-1})}_{t})dt-\sum_{i=1}^{N-1} \ell(X^{(\tau^{*}_{1},\xi^{*}_{1})\circ \ldots \circ (\tau^{*}_{i},\xi^{*}_{i})}_{\tau^{*}_{i}},\xi^{*}_{i},\tau^{*}_{i})\\
				&+Y^{(\tau^{*}_{1},\xi^{*}_{1})\circ \ldots \circ (\tau^{*}_{N},\xi^{*}_{N})}_{\tau^*_{N}}-\ell(X^{(\tau^{*}_{1},\xi^{*}_{1})\circ \ldots \circ (\tau^{*}_{N-1},\xi^{*}_{N-1})}_{\tau^{*}_{N}},\xi^{*}_{N},\tau^{*}_{N})],\\
			\end{aligned}
		\end{equation}
		for $\tau^*_{i}$ and $\xi^*_{i}$ defined as above due to $\{\tau^*_{i+1} <T \} \subset \{\tau^*_{i}<T\}$. Furthermore, this strategy must be finite. Assuming it is not, we can contradict the continuity of $Y$ using the same argument as in Lemma \ref{tt}. Thus by taking the limit we obtain $Y_{0}=J(\nu^{*})$.
		
		To complete the proof it remains to show that this strategy dominates any other $\nu \in \mathcal{A}_{f}$. This is seen by repeating the above argument taking into account the optimality characterisation in Theorem $\ref{Snell}$(ii).
	\end{proof}
\end{Thm}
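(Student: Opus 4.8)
The plan is to verify the three asserted properties of the verification family $\{Y^\nu\}$ in turn: that $Y_0$ equals the optimal value, that the control defined by \eqref{nko}--\eqref{nkj} is admissible and attains it, and that $Y_0$ is uniquely determined. The backbone is an iterated application of the Snell-envelope optimality characterisation in Theorem \ref{Snell}(ii), pasted along the sequence of optimal stopping times $\tau_j^*$.

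First I would check that property (i) in Definition \ref{lo} extends from deterministic times $s$ to arbitrary stopping times $\theta$, and that the essential supremum is actually attained at $\tau^*_\theta=\inf\{s\ge\theta:Y^\nu_s=\sup_{u\in U}\{Y^{\nu\circ(s,u)}_s-\ell(X^\nu_s,u,s)\}\}\wedge T$. This is exactly where Definition \ref{lo}(ii) (continuity in $s$ of the obstacle) and Theorem \ref{Snell}(ii) are used, while Corollary \ref{corsel} together with Definition \ref{lo}(iii) supplies a measurable selector $\xi^*$ realising the inner $\sup_{u\in U}$; note the obstacle lies in $\mathcal{S}^p_c$ so its Snell envelope does too, and Theorem \ref{Snell}(ii) applies. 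This gives the one-step identity $Y^\nu_\theta=\mathbb{E}[\int_\theta^{\tau^*}f(t,X^\nu_t)\,dt+Y^{\nu\circ(\tau^*,\xi^*)}_{\tau^*}-\ell(X^\nu_{\tau^*},\xi^*,\tau^*)\mid\mathcal{F}_\theta]$. Iterating this from $\tau^*_j$ to $\tau^*_{j+1}$ and substituting back (using the tower property and the nesting $\{\tau^*_{i+1}<T\}\subset\{\tau^*_i<T\}$, so that the running and cost terms telescope correctly), I obtain after $N$ steps the identity displayed at the end of the excerpt expressing $Y_0$ as the truncated performance of $\nu^*$ plus a remainder term $Y^{\nu^*|_N}_{\tau^*_N}-\ell(\cdots)$.

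Next I must pass to the limit $N\to\infty$. The key point is that $\nu^*=(\tau^*_i,\xi^*_i)_{i\in\mathbb{N}}\in\mathcal{A}_f$, i.e. $\tau^*_i=T$ eventually a.s.; otherwise, on a set of positive probability infinitely many impulses occur, and since each costs at least $K_6>0$ while the running reward is integrable by Theorem \ref{dyn} and Assumption 2, the partial sums of costs would diverge, contradicting continuity (indeed finiteness) of $Y$ — this is the Lemma \ref{tt} argument reused. With $\nu^*\in\mathcal{A}_f$ the remainder term vanishes in the limit (the $\ell$-term and the $Y$-term at $\tau^*_N=T$ collapse), so dominated convergence — justified by the moment bound $\sup_{\nu\in\mathcal{A}_f}\mathbb{E}[\sup_t\norm{X^{x,\nu}_t}^q]<\infty$ of Theorem \ref{dyn} together with the linear growth of $f$ — yields $Y_0=J(\nu^*)$. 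Finally, replacing the optimal $\tau^*,\xi^*$ at each stage by an arbitrary stopping time/impulse pair and invoking the $\esssup$ inequality in Definition \ref{lo}(i) (rather than the attained equality) gives $Y_0\ge J(\nu)$ for every $\nu\in\mathcal{A}_f$, and hence for every $\nu\in\mathcal{A}$ by Lemma \ref{tt}; this simultaneously proves $Y_0=\sup_{\nu\in\mathcal{A}}J(\nu)$ and that $\nu^*$ solves Problem \ref{vafan}. Uniqueness of $Y_0$ is then immediate since the value $\sup_\nu J(\nu)$ does not depend on the chosen family.

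I expect the main obstacle to be the limiting argument of the previous paragraph done carefully: showing $\nu^*$ is genuinely finite (not merely admissible), controlling the remainder $Y^{\nu^*|_N}_{\tau^*_N}$ uniformly in $N$, and justifying the interchange of limit and expectation. The estimate needed is essentially that $Y^\nu_\theta$ is sandwiched between $-\mathbb{E}[\sum\ell]$-type quantities and $\mathbb{E}[\int f]$-type quantities uniformly over $\nu\in\mathcal{A}_f$, which follows from Theorem \ref{dyn} and Assumption 2 but must be assembled; the path-dependence of the dynamics does not cause extra trouble here because Theorem \ref{dyn} already absorbed it. The rest — the one-step recursion, the telescoping, and the domination of competitors — is a routine transcription of the switching-control argument of \cite{DJ} to the impulse setting, with Corollary \ref{corsel} handling the measurable selection of impulses that was not needed in the switching case.
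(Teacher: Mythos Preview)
Your proposal is correct and follows essentially the same route as the paper's proof: both iterate the one-step Snell-envelope optimality identity (via Theorem~\ref{Snell}(ii) and Corollary~\ref{corsel}) along the sequence $(\tau_j^*,\xi_j^*)$, argue finiteness of $\nu^*$ by the cost-blow-up argument of Lemma~\ref{tt}, pass to the limit, and then dominate competitors by replacing equalities with the $\esssup$ inequalities. Your write-up is in fact more explicit than the paper's about the limiting step (dominated convergence via the moment bounds of Theorem~\ref{dyn}) and about how uniqueness of $Y_0$ follows, but the strategy is the same.
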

\section{Existence of verification family}
In this section we consider the existence of the verification family from Section $4$. Hence, the main task is to prove the following theorem,
\begin{Thm}\label{Thm:neat}
	Under Assumptions $1\,\&\,2$, there exists a family of continuous supermartingales satisfying
	\begin{equation*}
		\begin{aligned}
			&(i)\,Y_{s}^{\nu}=\esssup_{\tau \in \mathcal{T}_{s}} \mathbb{E}[\int_{s}^{\tau}f(t,X^{\nu}_{t})dt+sup_{u\in U}\{Y^{\nu\circ (\tau,u)}_{\tau}- \ell(X^{\nu}_{\tau},u,\tau)\}\chi_{\{\tau<T\}}\mid \mathcal{F}_{s}]\\
			&(ii)\,\sup_{u\in U}\{Y^{\nu\circ (s,u)}_{s}-\ell(X^{\nu}_{s},u,s)\} \text{ is a.s continuous and adapted in $s$}.\\
			&(iii)\, Y^{\nu\circ(\tau,u)}_\tau-\ell(X^\nu_\tau,u,\tau)\text{ is upper semi-continous in $u$ for all $\tau$}
		\end{aligned}
	\end{equation*}

\end{Thm}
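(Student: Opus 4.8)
The plan is to construct the verification family as the increasing limit of an approximating sequence $\{Y^{\nu,n}\}_{\nu\in\mathcal{A}_f}$, where $Y^{\nu,n}$ allows at most $n$ further impulses, following the scheme of \cite{DJ}. First I would set $Y^{\nu,0}_s=\mathbb{E}[\int_s^T f(t,X^\nu_t)\,dt\mid\mathcal{F}_s]$, which is a continuous supermartingale of class $[D]$ since $f$ has linear growth and Theorem \ref{dyn} gives the requisite moment bounds on $X^\nu$ uniformly over $\mathcal{A}_f$. Then recursively, given $Y^{\nu,n}$ for all $\nu\in\mathcal{A}_f$, I would define
\begin{equation*}
	Y^{\nu,n+1}_s=\esssup_{\tau\in\mathcal{T}_s}\mathbb{E}\Big[\int_s^\tau f(t,X^\nu_t)\,dt+\sup_{u\in U}\{Y^{\nu\circ(\tau,u),n}_\tau-\ell(X^\nu_\tau,u,\tau)\}\chi_{\{\tau<T\}}\mid\mathcal{F}_s\Big],
\end{equation*}
recognising the right-hand side as the Snell envelope (Theorem \ref{Snell}) of the reward process $R^{\nu,n}_s=\int_0^s f(t,X^\nu_t)\,dt+\sup_{u\in U}\{Y^{\nu\circ(s,u),n}_s-\ell(X^\nu_s,u,s)\}\chi_{\{s<T\}}$, shifted by $-\int_0^s f$. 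Two things must be checked at each stage before the envelope is even well-defined: that $u\mapsto Y^{\nu\circ(s,u),n}_s-\ell(X^\nu_s,u,s)$ is u.s.c. so that Corollary \ref{corsel} makes the supremum over the compact $U$ a measurable, attained quantity; and that $s\mapsto\sup_u\{\cdots\}$ is a.s. continuous and adapted so that $R^{\nu,n}$ is continuous and in $\mathcal{S}^p_c$, whence Theorem \ref{Snell}(iii) gives $Y^{\nu,n+1}\in\mathcal{S}^p_c$. The u.s.c.\ and continuity properties are exactly where Theorem \ref{dyn}, and in particular the flow estimate \eqref{nora}, enters: $\|X^{x,\nu_1\circ(t,u)\circ\nu_2}-X^{x,\nu_1\circ(\hat t,\hat u)\circ\nu_2}\|$ is controlled in $L^{4+2m}$ by $\|(t-\hat t,u-\hat u)\|^{(2+m)/(4+2m)}$, which via a Kolmogorov-type continuity argument upgrades to a continuous-in-$(t,u)$ modification of the map $(t,u)\mapsto Y^{\nu\circ(t,u),n}_t$; Lipschitz continuity of $\ell$ then transfers this to $Y^{\nu\circ(t,u),n}_t-\ell(X^\nu_t,u,t)$, giving both (ii) and (iii) at level $n+1$ by induction.

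Next I would establish monotonicity: $Y^{\nu,n}\le Y^{\nu,n+1}$ for all $\nu$ and $n$, by induction on $n$, using that allowing one more impulse (or choosing $\tau=T$, i.e.\ no impulse) can only increase the essential supremum, together with the inductive hypothesis inside the $\sup_u$ term. Combined with a uniform upper bound — $Y^{\nu,n}_s\le\mathbb{E}[\int_s^T|f(t,X^\nu_t)|\,dt\mid\mathcal{F}_s]$, since $\ell>K_6>0$ makes every impulse strictly costly, so the optimal finite strategy in the approximation is bounded by the no-impulse payoff plus zero — the limit $Y^\nu:=\lim_n Y^{\nu,n}=\sup_n Y^{\nu,n}$ exists and is finite, with $\sup_{\nu\in\mathcal{A}_f}\|Y^\nu\|_{\mathcal{S}^p}<\infty$ by Theorem \ref{dyn}. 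By Theorem \ref{Snell}(iii), increasing pointwise convergence of the reward processes $R^{\nu,n}\uparrow R^\nu$ (which follows from the monotone convergence $Y^{\nu\circ(s,u),n}\uparrow Y^{\nu\circ(s,u)}$ inside the supremum, interchanging $\sup_u$ and $\lim_n$ by monotonicity) forces $Y^{\nu,n}\uparrow Y^\nu$ with $Y^\nu$ itself a Snell envelope, hence a supermartingale satisfying the fixed-point equation (i); the same theorem's last clause, $X\in\mathcal{S}^p_c\Rightarrow Z^X\in\mathcal{S}^p_c$, yields continuity of $Y^\nu$ provided the limiting reward process $R^\nu$ is continuous.

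The main obstacle is precisely closing this last loop: passing the regularity properties (ii) and (iii) to the limit. At each finite level they hold by the flow estimate, but continuity of $s\mapsto\sup_u\{Y^{\nu\circ(s,u)}_s-\ell(X^\nu_s,u,s)\}$ for the limit is not automatic from continuity at each level $n$ unless the convergence $Y^{\nu,n}\to Y^\nu$ is uniform in an appropriate sense. The plan here is to prove a uniform (in $n$) modulus-of-continuity estimate for $(t,u)\mapsto Y^{\nu\circ(t,u),n}_t$ — the constants coming out of \eqref{nora} and the Snell-envelope $L^p$-contraction do not degrade with $n$ because $\ell\ge K_6$ discounts deep compositions — so that the family $\{(t,u)\mapsto Y^{\nu\circ(t,u),n}_t\}_n$ is equicontinuous, whence the monotone pointwise limit is continuous (Dini-type argument). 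For u.s.c.\ in $u$ of the limit, an increasing limit of u.s.c.\ functions need not be u.s.c., so I would instead argue that the limit is in fact continuous in $u$ by the same equicontinuity, which is stronger than (iii). A secondary technical point is the measurability of $\nu\mapsto Y^\nu$ and of the selections $\xi^*_j$; this is handled by Theorem \ref{kokoko} and Corollary \ref{corsel} applied at each level and preserved in the limit. Once continuity of $R^\nu$ is secured, Theorem \ref{Snell} delivers all three properties and the proof is complete.
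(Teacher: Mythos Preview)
Your overall scheme matches the paper's: define the same approximating sequence $\{Y^{\nu,k}\}$, prove regularity at each level via the flow estimate \eqref{nora} and a Kolmogorov argument, establish monotonicity and uniform upper bounds from $\ell\ge K_6$, then pass to the limit and invoke Theorem~\ref{Snell}(iii). The identification of the limiting step as the main obstacle is also correct.

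The gap is in how you propose to close that limiting step. You plan to show a modulus-of-continuity estimate for $(t,u)\mapsto Y^{\nu\circ(t,u),n}_t$ that is \emph{uniform in $n$}, asserting that ``the constants coming out of \eqref{nora} and the Snell-envelope $L^p$-contraction do not degrade with $n$''. But they do degrade: inspect the proof of \eqref{nora} in Theorem~\ref{dyn} and you will see the bound carries a factor of $N_2$, the number of interventions in $\nu_2$, so the constant in the statement depends on $k$ when $\nu_2\in\mathcal A_f^k$. The same dependence propagates through the Kolmogorov estimate in Lemma~\ref{jcts} (the sums $\sum_{i=0}^{m+1}$). So the equicontinuity route, as you have sketched it, does not go through without a further idea.

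The paper sidesteps this entirely. Instead of uniform moduli, Lemma~\ref{la} proves that $(t,u)\mapsto Y^{\nu\circ(t,u),k}_t-\ell(X^\nu_t,u,t)$ is \emph{uniformly Cauchy in $k$}, a.s.\ in $\omega$, over $[0,T]\times U$. The mechanism is exactly the one you gesture at with ``$\ell\ge K_6$ discounts deep compositions'', but made quantitative differently: the representation \eqref{verf} plus $\ell\ge K_6$ gives $\mathbb E[\chi_{\{\tau^*_{k'}<T\}}\mid\mathcal F_t]\le C(\omega)/k'$, and then a truncation $Y^{\nu\circ(t,u),\hat k,k'}$ of the $\hat k$-optimal strategy to its first $k'$ impulses is sandwiched between $Y^{\cdot,k'}$ and $Y^{\cdot,\hat k}$; the difference lives on $\{\tau^*_{k'+1}<T\}$ and is killed by H\"older against the small conditional probability. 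Uniform convergence plus continuity at each fixed level (with $k$-dependent constants, which is all Lemma~\ref{jcts} actually gives) then yields continuity of the limit, and u.s.c.\ in $u$ follows because uniform limits preserve it. Your Dini/Arzel\`a--Ascoli route would need the stronger equicontinuity input that is not available here.
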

To prove this theorem we will use an approximating scheme similar to $\cite{DJ}$, where they allow the system to be intervened on $k$ times. In particular, using induction we will define the following families of processes
\begin{equation}
	\{Y^{\nu,0}_{s}\}_{\nu \in \mathcal{A}_{f} }=\{\mathbb{E}[\int_{s}^{T}f(t,X^{\nu}_{t})dt \mid \mathcal{F}_{s}]\}_{\nu \in \mathcal{A}_{f} }
\end{equation}
\begin{equation}\label{ohno}
	\begin{aligned}
		\{Y^{\nu,k}_{s}\}_{\nu \in \mathcal{A}_{f}}&=\{\esssup_{\tau \in \mathcal{T}_{s}}\mathbb{E}[\int_{s}^{\tau}f(t,X^{\nu}_{t})dt+\sup_{u\in U} \{Y^{\nu\circ (\tau,u),k-1}_{\tau}\\
		&-\ell(X^{\nu}_{\tau},u,\tau)\}\chi_{\{\tau < T\}} \mid  \mathcal{F}_{s}]\}_{\nu \in \mathcal{A}_{f}. }
	\end{aligned}
\end{equation}
The existence of such families is non-trivial, since it is not clear if the process inside of the expectation fulfils the conditions of Theorem $\ref{Snell}$. In order to prove that these families exist and that their limit is a verification family, we state and prove a few lemmas. 

\begin{Lem}\label{jcts}
	$Y^{\nu\circ (t,u),k}_{t}$  is a.s. continuous as a function of $(t,u)$ for any $k$.
\end{Lem}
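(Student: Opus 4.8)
The plan is to prove, by induction on $k$, a modulus‑of‑continuity estimate of Kolmogorov type. Set $p:=4+2m$ and note that the parameter set $[0,T]\times U$ has dimension $1+m<2+m$. It suffices to show that for every $\nu\in\mathcal A_{f}$ (so that $\nu\circ(t,u)\in\mathcal A_{f}$) there is a constant $C$, independent of $(t,u),(\hat t,\hat u)$, with
\begin{equation*}
	\mathbb{E}\big[\,\big|Y^{\nu\circ(t,u),k}_{t}-Y^{\nu\circ(\hat t,\hat u),k}_{\hat t}\big|^{\,p}\big]\le C\,\|(t-\hat t,\,u-\hat u)\|^{\,2+m},\qquad |t-\hat t|\le1;
\end{equation*}
Kolmogorov's continuity criterion then furnishes an a.s.\ continuous modification of $(t,u)\mapsto Y^{\nu\circ(t,u),k}_{t}$, which is the claim. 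The base case $k=0$, with $Y^{\nu\circ(t,u),0}_{t}=\mathbb E\big[\int_{t}^{T}f(s,X^{\nu\circ(t,u)}_{s})\,ds\mid\mathcal F_{t}\big]$, is the inductive step run with the trivial reward.

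For the inductive step fix $t\le\hat t$, write $\mu:=\nu\circ(t,u)$, and split the increment as $A+B$ with $A:=Y^{\mu,k}_{t}-Y^{\mu,k}_{\hat t}$ and $B:=Y^{\mu,k}_{\hat t}-Y^{\nu\circ(\hat t,\hat u),k}_{\hat t}$. The term $B$ compares, at the common instant $\hat t$, the $k$‑step values of two controls that differ only through the buried impulse $(t,u)$ versus $(\hat t,\hat u)$. Unfolding the recursion \eqref{ohno} exactly as in the proof of the verification theorem, $Y^{\cdot,k}$ is an iterated Snell envelope, so $|B|$ is dominated by an $\esssup$ over stopping times of conditional expectations given $\mathcal F_{\hat t}$ of differences of the driving rewards; since $\esssup$ and $\mathbb E[\,\cdot\mid\mathcal F_{\hat t}]$ are non‑expansive (also in $L^{p}$) and $f,\ell,\Gamma$ are Lipschitz, each such difference is bounded by $\sup_{s\in[\hat t,T]}\|X^{\nu\circ(t,u)\circ\kappa}_{s}-X^{\nu\circ(\hat t,\hat u)\circ\kappa}_{s}\|$, uniformly over continuations $\kappa$ consisting of at most $k$ further impulses; this is precisely the quantity whose $p$‑th moment is controlled by \eqref{nora} --- whose left‑hand side already carries the supremum over $\nu_{2}\in\mathcal A^{k}_{f}$ --- so $\mathbb E|B|^{p}\le C\|(t-\hat t,u-\hat u)\|^{2+m}$. (The flow property of Theorem~\ref{P1} is what allows the perturbed post‑impulse initial condition $\Gamma(\cdot,u)$ to be carried continuously into the subsequent dynamics, and is already built into \eqref{nora}.)

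The term $A$, which only moves the evaluation instant for the fixed control $\mu$, is the main obstacle: it is intrinsically a (super)martingale time‑increment rather than a state perturbation, so it cannot be reached through \eqref{nora}. Write $Y^{\mu,k}_{s}=Z^{R}_{s}-\int_{0}^{s}f(r,X^{\mu}_{r})\,dr$, where $Z^{R}$ is the Snell envelope of $R_{s}:=\int_{0}^{s}f(r,X^{\mu}_{r})\,dr+\sup_{u\in U}\{Y^{\mu\circ(s,u),k-1}_{s}-\ell(X^{\mu}_{s},u,s)\}\chi_{\{s<T\}}$, a continuous supermartingale of class $[D]$ in $\mathcal S^{p}_{c}$ (cf.\ Theorem~\ref{Snell}); with its Doob--Meyer decomposition $Z^{R}=Z^{R}_{0}+M-\Lambda$ one gets $|A|\le|M_{\hat t}-M_{t}|+(\Lambda_{\hat t}-\Lambda_{t})+\big|\int_{t}^{\hat t}f(r,X^{\mu}_{r})\,dr\big|$. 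To handle the first two pieces the induction must be carried in the strengthened form that the martingale part and the finite‑variation part of $Y^{\mu\circ(s,u),k-1}$ --- hence of $R$ --- are It\^o processes whose characteristics are bounded in $L^{p}$ uniformly in time, a regularity transmitted through the recursion by the functional‑Lipschitz Assumption~\ref{A1}, Assumption~2 and the moment bounds of Theorem~\ref{dyn}. Granting this, $\Lambda$ is absolutely continuous with an $L^{p}$ density and $M$ has an $L^{p}$‑bounded representing integrand, whence the Burkholder--Davis--Gundy inequality and Jensen give $\mathbb E|M_{\hat t}-M_{t}|^{p}\le C(\hat t-t)^{p/2}=C(\hat t-t)^{2+m}$ and $\mathbb E(\Lambda_{\hat t}-\Lambda_{t})^{p}+\mathbb E\big|\int_{t}^{\hat t}f(r,X^{\mu}_{r})\,dr\big|^{p}\le C(\hat t-t)^{p}$, the latter also using $|f|\le C(1+\|X\|)$ and Theorem~\ref{dyn}. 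Since $\hat t-t\le1$, adding the estimates for $A$ and $B$ yields the displayed bound and closes the induction; the delicate step is exactly this uniform‑in‑time $L^{p}$ control of the martingale representation and of the increasing part entering $A$.
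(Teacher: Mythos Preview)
Your treatment of the term $B$ is morally correct and close to what the paper does, but your handling of $A$ has a genuine gap. You write that ``the induction must be carried in the strengthened form'' that the Doob--Meyer pieces of $Y^{\mu\circ(s,u),k-1}$ are It\^o processes with $L^{p}$-bounded characteristics, and that ``granting this, $\Lambda$ is absolutely continuous with an $L^{p}$ density''. Neither implication is established. First, you give no argument that the strengthened hypothesis propagates through the recursion, nor do you verify it at $k=0$. Second, and more seriously, even if the obstacle $R$ were an It\^o semimartingale, it does \emph{not} follow that the increasing process $\Lambda$ in the Doob--Meyer decomposition of its Snell envelope is absolutely continuous: $\Lambda$ is supported on the contact set $\{Z^{R}=R\}$ and can in principle be singular. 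Results of reflected-BSDE type that do yield absolute continuity of $\Lambda$ require specific structural hypotheses and a separate proof; you cannot simply read them off from Assumptions~\ref{A1} and~2. Finally, even if one could obtain such a density for each fixed $\mu=\nu\circ(t,u)$, you would still need the $L^{p}$ bound on it to be \emph{uniform} in $(t,u)$ for the Kolmogorov constant to be admissible, and nothing in your sketch addresses that.

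The paper sidesteps this obstacle entirely by never looking at the time-increment of the Snell envelope as a process. Instead it first unrolls the recursion \eqref{ohno} into the closed-form representation \eqref{verf}, so that $Y^{\nu\circ(t,u),k}_{t}=\mathbb{E}[G(t,u)\mid\mathcal F_{t}]$ for an explicit $\mathcal F_{T}$-measurable functional $G(t,u)$ of the optimally controlled state. The increment is then split not as ``same control, different time'' plus ``different control, same time'', but as
\[
\big(\mathbb{E}[G(t,u)\mid\mathcal F_{t}]-\mathbb{E}[G(t,u)\mid\mathcal F_{\hat t}]\big)+\big(\mathbb{E}[G(t,u)\mid\mathcal F_{\hat t}]-\mathbb{E}[G(\hat t,\hat u)\mid\mathcal F_{\hat t}]\big).
\]
The first bracket is a pure filtration change of a \emph{single} random variable and equals $\int_{\hat t}^{t}\mathcal M_{s}\,dB_{s}$ by martingale representation, so BDG gives the required power of $|t-\hat t|$ with no Doob--Meyer regularity needed. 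The second bracket is conditioned on a common $\sigma$-algebra, so Lipschitzness of $f,\ell$ and the state estimate \eqref{nora} apply directly (this is essentially your $B$). Replacing your $A/B$ split by this filtration/state split is the missing idea.
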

\emph{Proof.} We start by proving the following representation
\begin{equation}\label{verf}
	\begin{aligned}
		&Y^{\nu\circ (t,u),k}_{t} \\
		&=\esssup_{\tau \in \mathcal{T}_{t}}\mathbb{E}[\int_{t}^{\tau}f(s,X^{\nu\circ (t,u)}_{s})ds+\sup_{\tilde{u} \in U}\{Y^{\nu\circ(t,u)\circ (\tau,\tilde{u}),k-1}_{\tau}-\ell(X^{\nu\circ (t,u)}_{\tau},\tilde{u},\tau)\} \mid  \mathcal{F}_{t}]\\
		&=\mathbb{E}[\int_{t}^{\tau^{*}}f(s,X^{\nu\circ (t,u)}_{s})ds+(Y^{\nu\circ(t,u)\circ (\tau^{*},\xi^{*}),k-1}_{\tau^{*}}- \ell(X^{\nu\circ (t,u)}_{\tau^{*}},\xi^{*},\tau^{*}))\chi_{\{\tau^{*} < T\}} \mid  \mathcal{F}_{t}]\\
		&=\mathbb{E}[\int_{t}^{T} f(s,X^{\nu\circ (t,u) \circ \bigcirc_{j=0}^{k} (\tau^{*}_{j},\xi^{*}_{j}) }_{s})ds-\sum_{i=0}^{k}  \ell(X^{\nu\circ (t,u) \circ \bigcirc_{j=0}^{k} (\tau^{*}_{j},\xi^{*}_{j})}_{\tau_{i}^{*}},\xi_{i}^{*},\tau_{i}^{*})\chi_{\{\tau^{*} < T\}} \mid  \mathcal{F}_{t}]
	\end{aligned}
\end{equation}
where $(\tau^{*}_{j},\xi^{*}_{j})_{j=0}^{n}$ is defined as in (\ref{nko}) and (\ref{nkj}). This is thus essentially the algorithm from the verification theorem, the situation differs due to the restricted number of interventions allowed in each step. 

To obtain the second equality we need what is inside of the Snell envelope to be continuous in $t$, adapted, of class [D] and what is inside of the sumpreum to be continuos in $u$. For the third we need the same to be true for all $m \le k-1$ and to use $(\ref{ohno})$. Since the Stiltjes integral has all the mentioned properties we restrict our attention to terms of the form,
\begin{equation}\label{daa}
	\sup_{u \in U}\{Y_{t}^{\hat{\nu} \circ (t,u),m}-\ell(X^{\hat{\nu}}_{t},u,t)\},
\end{equation}
for any $\hat{\nu}$. Note that in order for us to get continuity of $(\ref{daa})$ it is sufficient to have continuity in both $t$ and $u$ since $[0,T]$ and $U$ are both compact.

Suppose $(\ref{daa})$ and what is inside of the supremum is continuous in $t$ and $u$ respectively for some $m<k+1$ and all $m^{'}< m$. This means that we have (\ref{verf}) for $m+1$. Moreover given $\tilde{\nu}=(\tilde{\tau}_{j}\vee t,\tilde{\xi}_{j}) \in \mathcal{A}_{f}^{m+1}$ we set,
\begin{equation}
	\begin{aligned}
		&\tilde{Y}^{\hat{\nu}\circ (t,u) \circ \tilde{\nu}}_{t}=\mathbb{E}[\int_{t}^{T} f(s,X^{\hat{\nu}\circ (t,u) \circ \bigcirc_{j=0}^{m+1} (\tilde{\tau}_{j}\vee t,\tilde{\xi}_{j}) }_{s})ds\\
		&-\sum_{i=0}^{m+1}  \ell(X^{\hat{\nu}\circ (t,u) \circ \bigcirc_{j=0}^{m+1} (\tilde{\tau}_{j}\vee t,\tilde{\xi}_{j})}_{\tilde{\tau}_{i}},\tilde{\xi}_{i},\tilde{\tau}_{i}\vee t) \mid  \mathcal{F}_{t}],
	\end{aligned}
\end{equation}
which implies,
\begin{equation}
	\mid Y^{\hat{\nu} \circ (t,u),m+1}_{t}-Y^{\hat{\nu} \circ (\hat{t},\hat{u}),m+1}_{\hat{t}}\mid \le \sup_{\tilde{\nu} \in \mathcal{A}_{f}^{m+1}}\mid \tilde{Y}^{\hat{\nu} \circ (t,u),\tilde{\nu}}_{t}-\tilde{Y}^{\hat{\nu} \circ (\hat{t},\hat{u}),\tilde{\nu}}_{\hat{t}} \mid. 
\end{equation}
Hence,
\begin{equation}\label{jaj}
	\begin{aligned}
		&\mathbb{E}\mid Y^{\hat{\nu}\circ (t,u),m+1}_{t}-Y^{\hat{\nu}\circ (\hat{t},\hat{u}),m+1}_{\hat{t}} \mid^{6+2l}  \\
		&\le \mathbb{E}\esssup_{\tilde{\nu} \in \mathcal{A}_{f}^{m+1}}\mid \tilde{Y}^{\hat{\nu} \circ (t,u),\tilde{\nu}}_{t}-\tilde{Y}^{\hat{\nu} \circ (\hat{t},\hat{u}),\tilde{\nu}}_{\hat{t}} \mid^{6+2l}   \\
		&=\mathbb{E}\Big(\esssup_{\tilde{\nu}\in \mathcal{A}_{f}^{m+1}}\mid\mathbb{E}[\int_{t}^{T} f(s,X^{\hat{\nu}\circ (t,u) \circ \tilde{\nu}}_{s})ds-\sum_{i=0}^{m+1}  \ell(X^{\hat{\nu}\circ (t,u) \circ \tilde{\nu}}_{\tilde{\tau}_{i}\vee t},\tilde{\xi}_{i},\tilde{\tau}_{i}\vee t) \mid  \mathcal{F}_{t}]\\
		&-\mathbb{E}[\int_{\hat{t}}^{T} f(s,X^{\hat{\nu}\circ (\hat{t},\hat{u}) \circ \tilde{\nu} }_{s})ds-\sum_{i=0}^{m+1}  \ell(X^{\hat{\nu}\circ (\hat{t},\hat{u})  \circ \tilde{\nu}}_{\tilde{\tau}_{i}\vee \hat{t}},\tilde{\xi}_{i},\tilde{\tau}_{i}\vee \hat{t}) \mid  \mathcal{F}_{\hat{t}}]\mid^{6+2l}\Big) \\
		&\le\sup_{\tilde{\nu} \in \mathcal{A}_{f}^{m+1}}\mathbb{E}\Big( \mid \mathbb{E}[\int_{t}^{T} f(s,X^{\hat{\nu}\circ (t,u) \circ \tilde{\nu} }_{s})ds-\sum_{i=0}^{m+1}  \ell(X^{\hat{\nu}\circ (t,u) \circ \tilde{\nu}}_{\tilde{\tau}_{i}\vee t},\tilde{\xi}_{i},\tilde{\tau}_{i}\vee t) \mid  \mathcal{F}_{t}]\\
		&-\mathbb{E}[\int_{t}^{T} f(s,X^{\hat{\nu}\circ (t,u) \circ \tilde{\nu} }_{s})ds-\sum_{i=0}^{m+1}  \ell(X^{\hat{\nu}\circ (t,u) \circ \tilde{\nu}}_{\tilde{\tau}_{i}\vee t},\tilde{\xi}_{i},\tilde{\tau}_{i}\vee t) \mid  \mathcal{F}_{\hat{t}}]\\
		&+\mathbb{E}[\int_{t}^{T} f(s,X^{\hat{\nu}\circ (t,u) \circ\tilde{\nu} }_{s})ds-\sum_{i=0}^{m+1}  \ell(X^{\hat{\nu}\circ (t,u) \circ \tilde{\nu}}_{\tilde{\tau}_{i}\vee t},\tilde{\xi}_{i},\tilde{\tau}_{i}\vee t) \mid  \mathcal{F}_{\hat{t}}]   \\
		&-\mathbb{E}[\int_{\hat{t}}^{T} f(s,X^{\hat{\nu}\circ (\hat{t},\hat{u}) \circ \tilde{\nu} }_{s})ds-\sum_{i=0}^{m+1}  \ell(X^{\hat{\nu}\circ (\hat{t},\hat{u})  \circ \tilde{\nu}}_{\tilde{\tau}_{i}\vee \hat{t}},\tilde{\xi}_{i},\tilde{\tau}_{i}\vee \hat{t}) \mid  \mathcal{F}_{\hat{t}}] \mid^{6+2l}\Big)\\
		&\le C\sup_{\tilde{\nu} \in \mathcal{A}_{f}^{m+1}}\mathbb{E}\Big( \mid \mathbb{E}[\int_{t}^{T} f(s,X^{\hat{\nu}\circ (t,u) \circ \tilde{\nu} }_{s})ds-\sum_{i=0}^{m+1}  \ell(X^{\nu\circ (t,u) \circ \tilde{\nu}}_{\tilde{\tau}_{i}\vee t},\tilde{\xi}_{i},\tilde{\tau}_{i}\vee t) \mid  \mathcal{F}_{t}]\\
		&-\mathbb{E}[\int_{t}^{T} f(s,X^{\hat{\nu}\circ (t,u) \circ \tilde{\nu} }_{s})ds-\sum_{i=0}^{m+1}  \ell(X^{\hat{\nu}\circ (t,u) \circ \tilde{\nu}}_{\tilde{\tau}_{i}\vee t},\tilde{\xi}_{i},\tilde{\tau}_{i}\vee t) \mid  \mathcal{F}_{\hat{t}}]\mid^{6+2l}\Big)\\
		&+C\sup_{\tilde{\nu} \in \mathcal{A}_{f}^{m+1}}\mathbb{E}\Big( \mid\mathbb{E}[\int_{t}^{T} f(s,X^{\hat{\nu}\circ (t,u) \circ\tilde{\nu} }_{s})ds-\sum_{i=0}^{m+1}  \ell(X^{\hat{\nu}\circ (t,u) \circ \tilde{\nu}}_{\tilde{\tau}_{i}\vee t},\tilde{\xi}_{i},\tilde{\tau}_{i}\vee t) \mid  \mathcal{F}_{\hat{t}}]   \\
		&-\mathbb{E}[\int_{\hat{t}}^{T} f(s,X^{\hat{\nu}\circ (\hat{t},\hat{u}) \circ \tilde{\nu} }_{s})ds-\sum_{i=0}^{m+1}  \ell(X^{\hat{\nu}\circ (\hat{t},\hat{u})  \circ \tilde{\nu}}_{\tilde{\tau}_{i}\vee \hat{t}},\tilde{\xi}_{i},\tilde{\tau}_{i}\vee \hat{t}) \mid  \mathcal{F}_{\hat{t}}] \mid^{6+2l}\Big).
	\end{aligned}
\end{equation}
Which by the martingale representations theorem,
\begin{equation}
	\begin{aligned}
		&= C\sup_{\tilde{\nu} \in \mathcal{A}_{f}^{m+1}}\mathbb{E}\Big(\mid\mathbb{E}[M_{0}]+\int_{0}^{t}\mathcal{M}_{s}dB_s
		-\mathbb{E}[M_{0}]-\int_{0}^{\hat{t}}\mathcal{M}_{s}dB_s\mid^{6+2l}\Big)\\
		&+C\sup_{\tilde{\nu} \in \mathcal{A}_{f}^{m+1}}\mathbb{E}\Big( \mid \mathbb{E}[\int_{t}^{T} f(s,X^{\hat{\nu}\circ (t,u) \circ\tilde{\nu} }_{s})- f(s,X^{\hat{\nu}\circ (\hat{t},\hat{u}) \circ \tilde{\nu} }_{s})ds\\
		&+\sum_{i=0}^{m+1}  \ell(X^{\hat{\nu}\circ (\hat{t},\hat{u})  \circ \tilde{\nu}}_{\tilde{\tau}_{i}\vee \hat{t}},\tilde{\xi}_{i},\tilde{\tau}_{i}\vee \hat{t})-\ell(X^{\hat{\nu}\circ (t,u) \circ \tilde{\nu}}_{\tilde{\tau}_{i}\vee t},\tilde{\xi}_{i},\tilde{\tau}_{i}\vee t) \mid^{6+2l}\mathcal{F}_{\hat{t}}]\Big) 
	\end{aligned}
\end{equation}
\begin{equation}
	\begin{aligned}
		&\le C\sup_{\tilde{\nu} \in \mathcal{A}_{f}^{m+1}}\mathbb{E}\Big(\mid\int_{t}^{\hat{t}}\mathcal{M}_{s}dB_s\mid^{6+2l}\Big)\\
		&+C\sup_{\tilde{\nu} \in \mathcal{A}_{f}^{m+1}}\mathbb{E}\Big(  K_5(T-t)\sup_{s\in[t,T]}\norm{X^{\hat{\nu} \circ (t,u) \circ \tilde{\nu}}_{s}-X^{\hat{\nu} \circ (\hat{t},\hat{u}) \circ \tilde{\nu}}_{s}}^{6+2l}\\
		&+K_7\sum_{i=0}^{m+1} \norm{(X^{\hat{\nu}\circ (\hat{t},\hat{u})  \circ \tilde{\nu}}_{\tilde{\tau}_{i}\vee \hat{t}}-X^{\hat{\nu}\circ (t,u) \circ \tilde{\nu}}_{\tilde{\tau}_{i}\vee t},\tilde{\tau}_{i}\vee \hat{t}-\tilde{\tau}_{i}\vee t)}^{6+2l}\Big) \\
		& \le (t-\hat{t})^{2+l}C_{6+2l}\mathbb{E}[(\int_{0}^{T}\mid\mathcal{M}_{s}\mid^2ds)^{3+l}]+ C\norm{(t-\hat{t},u-\hat{u})}^{2+l},
	\end{aligned}
\end{equation}
where we obtain the last inequality using Burkholder-Davis-Gundy followed by Hölders inequality on the first term and Theorem \ref{dyn} on the second. Regarding $Y^{\nu\circ (t,u),m+1}_{t}$ as a random field indexed by $(t,u)$ puts us in position to use Kolmogorov continuity theorem. This in turn means that there exist a map $\chi:[0,T]\times\Omega\times U\to\mathbb{R}$ that is jointly continuous in $(t,u)$ such that $\chi(t,u)=Y^{\nu\circ (t,u),m+1}_t$, $\mathbb{P}$-a.s., for each $(t,u)\in[0,T]\times U$. On the other hand, a simple approximation routine shows that the $\mathbb{P}$-null set can be chosen independent of $(t,u)$ and $\mathbb{P}$-a.s.~continuity of $(t,u)\mapsto Y^{\nu\circ (t,u),m+1}_t$ follows. Hence we obtain continuity in $t$ of,
\begin{equation}
	\sup_{u \in U}\{Y_{t}^{\nu \circ (t,u),m+1}-\ell(X^{\nu}_{t},u,t)\},
\end{equation}
by arguing as above.
Finally, it also holds for $m=0$ by (\ref{jaj}).

Turning to adaptedness we use Corollary \ref{corsel}. We see that we have joint measurability of $Y_{t}^{\nu \circ (t,u),m}(\omega)$ for a given $t$ by looking at (\ref{verf}). This follows since the conditional is $\mathcal{F}_{t}$-measurable and we have continuity in $u$ for any version of the conditional.

Thus there is a $u^{*}_{t}(\omega)$ which is $\mathcal{F}_{t}$-measurable for which the supremum is attained. This gives us
\begin{equation}
	\sup_{u \in U}\{Y_{t}^{\nu \circ (t,u),m+1}-\ell(X^{\nu}_{t},u,t)\}=Y_{t}^{\nu \circ (t,u^{*}_{t}(\omega)),m+1}-\ell(X^{\nu}_{t},u^{*}_{t}(\omega),t),
\end{equation}
which is $\mathcal{F}_{t}$-measurable.

To establish that what we take the Snell envelope of is of class [D] we proceed as follows. Observe that $Y^{\nu\circ (t,\hat{u}),0}_{t}\le Y^{\nu\circ (t,\hat{u}),1}_{t}$ since the latter equals the former with $\tau=T$.

Suppose now that,
\begin{equation}
	Y^{\nu\circ (t,\hat{u}),k}_{t} \le Y^{\nu\circ (t,\hat{u}),k+1}_{t}
\end{equation}
holds for some $k$. Since,
\begin{equation}
	\begin{aligned}
		&Y^{\nu\circ (t,\hat{u}),k+2}_{t}\\
		&=\esssup _{\tau \in \mathcal{T}_{t}}\mathbb{E}[\int_{t}^{\tau}f(s,X^{\nu\circ (t,\hat{u})}_{s})ds\\
		&+\sup_{u\in U}\{ Y^{\nu\circ (t,\hat{u}) \circ (\tau,u),k+1}_{\tau}- \ell(X^{\nu}_{\tau},u,\tau)\}\chi_{\{\tau<T\}}\mid  \mathcal{F}_{t}] \\
		&\ge \esssup _{\tau \in \mathcal{T}_{t}}\mathbb{E}[\int_{t}^{\tau}f(s,X^{\nu\circ (t,\hat{u})}_{s})ds\\
		&+\sup_{u\in U}\{ Y^{\nu \circ (t,\hat{u})\circ (\tau,u),k}_{\tau}- \ell(X^{\nu}_{\tau},u,\tau)\}\chi_{\{\tau<T\}}\mid  \mathcal{F}_{t}]\\
		&=Y^{\nu\circ (t,\hat{u}),k+1}_{t}
	\end{aligned}
\end{equation}
we obtain monotonicity in $k$.  Next since $\ell> K_{6}> 0$ we have,
\begin{equation}
	\begin{aligned}
		&Y^{\nu\circ (t,\hat{u}),k-1}_{t}\\
		&=\esssup _{\tau \in \mathcal{T}_{t}} \mathbb{E}[\int_{t}^{\tau}f(s,X^{\nu\circ (t,\hat{u})}_{s})ds+\sup_{u \in U}\{Y^{\nu\circ (t,\hat{u}) \circ (\tau,u)}- \ell(X^{\nu}_{\tau},u,\tau)\}\mid  \mathcal{F}_{t}] \\
		&\le \mathbb{E}[\int_{t}^{T}\sup_{\hat{\nu}}f(s,X^{\hat{\nu}}_{s})ds\mid  \mathcal{F}_{t}].
	\end{aligned} 
\end{equation}
We obtain the bound for $ \mathcal{S}^p_{c}$ for all $m<k$ using the growth assumptions on $f$ and Theorem \ref{dyn}. We can therefore conclude that they are of class [D].

With this at hand we have representation (\ref{verf}) for any $k$ which by (\ref{jaj}) implies continuity of $Y_{t}^{\nu \circ (t,u),k}$ by the same reasoning.\begin{flushright} $\square$ \end{flushright}

\begin{Lem}
	Each member of  $\{Y^{\nu,k}_{s}\}_{\nu \in \mathcal{A}_{f} }$ belong to $\mathcal{S}^{p}_{c}$ for all $k$
\end{Lem}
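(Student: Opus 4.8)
The plan is to proceed by induction on $k$, mirroring the structure already used in Lemma \ref{jcts}. For the base case $k=0$ we have $Y^{\nu,0}_s=\mathbb{E}[\int_s^T f(t,X^\nu_t)\,dt\mid\mathcal{F}_s]$. By the growth estimate in Remark 2, $|f(t,x)|^p\le C(1+\|x\|^p)$, and by Theorem \ref{dyn} we have $\sup_{\nu\in\mathcal{A}_f}\mathbb{E}[\sup_{t\in[0,T]}\|X^\nu_t\|^q]<C_{T,q}$ for every $q\ge 1$; hence $\int_0^T f(t,X^\nu_t)\,dt\in L^p$ with a bound uniform in $\nu$. The process $t\mapsto \mathbb{E}[\int_0^T f(s,X^\nu_s)\,ds\mid\mathcal{F}_t]$ is a closed (uniformly integrable) martingale, so by the Brownian martingale representation theorem it admits a continuous modification; subtracting the continuous, $\mathcal{S}^p_c$ process $\int_0^t f(s,X^\nu_s)\,ds$ (continuous and $\mathcal{S}^p$ again by the growth bound) gives $Y^{\nu,0}\in\mathcal{S}^p_c$, and Doob's maximal inequality converts the $L^p$ bound on the terminal value into the required $\mathbb{E}[\sup_t|Y^{\nu,0}_t|^p]<\infty$.

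For the inductive step, assume $Y^{\nu,k-1}\in\mathcal{S}^p_c$ for all $\nu\in\mathcal{A}_f$. The process whose Snell envelope defines $Y^{\nu,k}$ is
\[
R^{\nu,k}_s:=\int_0^s f(t,X^\nu_t)\,dt+\sup_{u\in U}\{Y^{\nu\circ(s,u),k-1}_s-\ell(X^\nu_s,u,s)\}\chi_{\{s<T\}},
\]
up to the additive martingale term; I would check that $R^{\nu,k}$ satisfies the hypotheses of Theorem \ref{Snell}(iii), i.e.\ is continuous, adapted and of class $[D]$, and in fact lies in $\mathcal{S}^p_c$. Continuity in $s$ and adaptedness of the $\sup_{u\in U}$ term is exactly what Lemma \ref{jcts} established (using the measurable-selection Corollary \ref{corsel} for adaptedness); the Stieltjes integral and $\ell(X^\nu_\cdot,u,\cdot)$ are continuous by the Lipschitz assumptions on $f$ and $\ell$ together with continuity of $X^\nu$ between impulses. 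For the $\mathcal{S}^p_c$ bound one estimates $\sup_s|\sup_{u\in U}\{Y^{\nu\circ(s,u),k-1}_s-\ell(X^\nu_s,u,s)\}|\le \sup_{\hat\nu\in\mathcal{A}_f}\sup_s|Y^{\hat\nu,k-1}_s|+\sup_{u,s}|\ell(X^\nu_s,u,s)|$; the first term has finite $p$-th moment by the induction hypothesis together with the uniform-in-$\nu$ bound one gets from the argument in Lemma \ref{jcts} (where $Y^{\nu,k-1}$ is sandwiched between $0$ and $\mathbb{E}[\int_\cdot^T\sup_{\hat\nu}f(s,X^{\hat\nu}_s)\,ds\mid\mathcal{F}_\cdot]$), and the second by the Lipschitz/growth bound on $\ell$ and Theorem \ref{dyn}. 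Then Theorem \ref{Snell} gives existence of the Snell envelope $Z^{R^{\nu,k}}$, and since $R^{\nu,k}\in\mathcal{S}^p_c$, part (iii) of Theorem \ref{Snell} yields $Z^{R^{\nu,k}}\in\mathcal{S}^p_c$; finally $Y^{\nu,k}$ differs from $Z^{R^{\nu,k}}$ by subtracting the continuous $\mathcal{S}^p$ process $\int_0^\cdot f\,dt$, so $Y^{\nu,k}\in\mathcal{S}^p_c$, closing the induction.

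The main obstacle I anticipate is not the martingale-representation/continuity bookkeeping but verifying the class $[D]$ (equivalently $\mathcal{S}^p_c$) property of the integrand $R^{\nu,k}$ \emph{uniformly} in the index $\nu$, because the $\sup_{u\in U}\{Y^{\nu\circ(s,u),k-1}_s-\cdots\}$ term refers to controls $\nu\circ(s,u)$ with one extra impulse, so the induction hypothesis must be applied with a bound that does not degrade as the number of already-placed impulses grows; this is precisely why Theorem \ref{dyn}'s uniform-in-$\mathcal{A}_f$ moment bound and the two-sided sandwich $0\le Y^{\nu,k-1}_s\le \mathbb{E}[\int_s^T\sup_{\hat\nu}f(t,X^{\hat\nu}_t)\,dt\mid\mathcal{F}_s]$ from Lemma \ref{jcts} are essential, and I would make sure to invoke them rather than a naive per-$\nu$ estimate. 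A secondary technical point is ensuring the measurable version of $\sup_{u\in U}\{\cdots\}$ supplied by Corollary \ref{corsel} is the one used inside the Snell envelope, so that adaptedness is genuinely available; this was handled in Lemma \ref{jcts} and I would simply cite it.
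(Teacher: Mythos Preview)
Your proposal is essentially the same approach as the paper's proof: induction on $k$, with the base case handled by martingale representation in the Brownian filtration plus the growth bound on $f$, and the inductive step by invoking Lemma \ref{jcts} for continuity and adaptedness of the obstacle $\sup_{u\in U}\{Y^{\nu\circ(s,u),k-1}_s-\ell(X^\nu_s,u,s)\}$, then applying Theorem \ref{Snell}(iii) to transfer $\mathcal{S}^p_c$ regularity from the obstacle to its Snell envelope. One small correction: the sandwich you write as $0\le Y^{\nu,k-1}_s\le\mathbb{E}[\int_s^T\sup_{\hat\nu}f\,dt\mid\mathcal{F}_s]$ is not quite right on the left, since $f$ may be negative; the paper instead uses the monotonicity $Y^{\nu,0}_s\le Y^{\nu,k-1}_s$ (established in the proof of Lemma \ref{jcts}) together with $Y^{\nu,0}\in\mathcal{S}^p_c$ from the base case to close the two-sided bound.
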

\emph{Proof.} We proceed by induction. For $k=0$ a closed martingale, which in a Brownian filtration has an a.s continuous version. The second term is a Stiljtjes integral which is also continuous, thus the statement is true for $k=0$.

Now suppose the statement is true for some $k$ and for the whole family i.e that 
\begin{equation}
	Y^{\nu,k}_{s}=\esssup_{\tau \in \mathcal{T}_{s}}\mathbb{E}[\int_{s}^{\tau}f(t,X^{\nu}_{t})dt+\sup_{u\in U}\{Y^{\nu\circ (\tau, u),k-1}_{\tau}- \ell(X^{\nu}_{\tau},u,\tau)\}\mid  \mathcal{F}_{s}]
\end{equation}
exists and has a continuous version for any $\nu$.

As $Y^{\nu,k+1}_{s}$ is the Snell envelope of the process 
\begin{equation}
	(\int_{s}^{t}f(t,X^{\nu}_{t})dt+\sup_{u\in U}\{Y^{\nu\circ (t,u),k}_{t}- \ell(X^{\nu}_{t},u,t)) \}\chi_{\{\tau<T\}})_{t\ge 0}
\end{equation}
we will establish that the latter is of class [D], continuous and adapted in order to use Theorem \ref{Snell} (iii) to conclude that $Y^{\nu,n+1}_{s}$ indeed exist and is sufficiently regular.

The first term is obviously continuous and adapted. Moving on to the second term, i.e. $\sup_{u\in U}\{Y^{\nu\circ (t,u),k}_{t}- \ell(X^{\nu}_{t},u,t)\}$ we apply Lemma \ref{jcts} and argue as in (\ref{daa}) to obtain continuity, to get adaptedness we argue exactly as in the previous lemma.

That $Y^{\nu,k}_{t}$ satisfy the bound for  $\mathcal{S}^{p}_{c}$ and thus also belong to class [D] follows by the same reasoning as at the end of the proof of Lemma \ref{jcts}. \begin{flushright} $\square$ \end{flushright}

Recalling that 
\begin{equation}
	Y^{\nu,k}_{t}\le Y^{\nu,k+1}_{t}
\end{equation}
and notice that
\begin{equation}
	\begin{aligned}
		&Y^{\nu,k}_{t}=\esssup _{\tau \in \mathcal{T}_{t}} \mathbb{E}[\int_{t}^{\tau}f(s,X^{\nu}_{s})ds+\sup_{u \in U}\{Y^{\nu \circ (\tau,u),k-1}- \ell(X^{\nu}_{\tau},u,\tau)\}\chi_{\{\tau<T\}}\mid  \mathcal{F}_{t}] \\
		&\le \mathbb{E}[\int_{t}^{T}\sup_{\hat{\nu}}f(s,X^{\hat{\nu}}_{s})ds\mid  \mathcal{F}_{t}]<\infty
	\end{aligned}
\end{equation}
due to growth conditions on $f$ as well as Theorem \ref{dyn}, we define $\tilde{Y}^{\nu}_{t}=\lim_{k}Y^{\nu,k}_{t}$ for each $t$.
In the next Lemma we establish that this convergence is uniform when considered on the function  $Y^{\nu \circ (t,u),k}_{t}-\ell(X^{\nu}_{t},u,t)$ of $(t,u)$.
\begin{Lem}\label{la}
	$Y^{\nu \circ (t,u),k}_{t}-\ell(X^{\nu}_{t},u,t)$ converges uniformly as a function of $(t,u)$ as $k\to\infty$  for a.e. $\omega$.
	\begin{proof}
		We start by observing that
		\begin{equation}
			\begin{aligned}
				&\sup_{(t,u)\in [0,T]\times U}\mid Y^{\nu \circ (t,u),k}_{t}+\mathbb{E}[\sum_{i=0}^{k}  \ell(X^{\nu}_{\tau_{i}^{*}},u^{*},\tau_{i}^{*})\mid  \mathcal{F}_{t}] \mid \\
				&=\sup_{(t,u)\in [0,T]\times U} \mathbb{E}[\int_{t}^{T} \mid f(s,X^{\nu\circ (t,u) \circ \bigcirc_{j=0}^{k} (\tau^{*}_{j},u^{*}_{j}) }_{s})\mid ds \mid  \mathcal{F}_{t}]\\
				&\le C \cdot \sup_{(t,u)\in [0,T]\times U}  \mathbb{E}[\int_{t}^{T}\sup_{s\in [0,T]}C+\mid X^{\nu\circ (t,u) \circ \bigcirc_{j=0}^{k}  (\tau^{*}_{j},u^{*}_{j}) }_{s}\mid^{2} ds \mid  \mathcal{F}_{t}] \\
				&\le C\cdot \sup_{(t,u)\in [0,T]\times U}  \mathbb{E}[T \cdot \sup_{s\in [0,T]}\mid C+ X^{\nu\circ (t,u) \circ \bigcirc_{j=0}^{k}  (\tau^{*}_{j},u^{*}_{j}) }_{s}\mid^{2} \mid  \mathcal{F}_{t}]\\ 
			\end{aligned}
		\end{equation}
		Moreover, since
		\begin{equation}
			\begin{aligned}
				&\mathbb{E}\sup_{(t,u)\in [0,T]\times U}\mid Y^{\nu \circ (t,u),k}_{t}+\mathbb{E}[\sum_{i=0}^{k}  \ell(X^{\nu}_{\tau_{i}^{*}},u^{*},\tau_{i}^{*})\mid  \mathcal{F}_{t}]\mid  \\
				&\le C \mathbb{E}\sup_{t\in [0,T]}\sup_{u\in U}\sup_{r\in [t,T]}\mathbb{E}[T \cdot \sup_{s\in [0,T]}\mid C+ X^{\nu\circ (r,u) \circ \bigcirc_{j=0}^{k}  (\tau^{*}_{j},u^{*}_{j}) }_{s}\mid^{2} \mid  \mathcal{F}_{t}] \\  
				&\le C \cdot T \cdot\mathbb{E} \mathbb{E}[ \sup_{s\in [0,T]}\sup_{\hat{\nu}\in \mathcal{A}}\mid C+ X^{\nu\circ \hat{\nu} \circ\bigcirc_{j=0}^{k}  (\tau^{*}_{j},u^{*}_{j}) }_{s}\mid^{2} \mid  \mathcal{F}_{T}]  \\
				&=C\cdot \mathbb{E}\sup_{s\in [0,T]}\sup_{\hat{\nu}\in \mathcal{A}}\mid C+ X^{\nu\circ \hat{\nu} \circ \bigcirc_{j=0}^{k}  (\tau^{*}_{j},u^{*}_{j}) }_{s}\mid^{2} <C,
			\end{aligned}
		\end{equation}
		by Doobs inequality and a similar reasoning as in Theorem \ref{dyn}, we conclude that
		\begin{equation}
			\sup_{(t,u)\in [0,T]\times U}\mid Y^{\nu \circ (t,u),k}_{t}+\mathbb{E}[\sum_{i=0}^{k}  \ell(X^{\nu}_{\tau_{i}^{*}},u^{*},\tau_{i}^{*}) \cdot \chi_{\{\tau^{*}_{i} < T\}}\mid  \mathcal{F}_{t}] \mid < K(\omega).
		\end{equation}
		for some $\mathbb{P}$-a.s. finite $\mathcal{F}_{T}$-measurable random variable $K(\omega)$. Hence for any $\omega \in \Omega \setminus \mathcal{N}$ for some nullset $\mathcal{N}$ we have
		
		\begin{equation}\label{ka} 
			\begin{aligned}
				&\mathbb{E}[\sum_{i=0}^{k}  \ell(X^{\nu}_{\tau_{i}^{*}},u^{*},\tau_{i}^{*}) \cdot \chi_{\{\tau^{*}_{i} < T\}}\mid  \mathcal{F}_{t}] \le K(\omega)- Y^{\nu \circ (t,u),k}_{t}  \\
				&\qquad \qquad \qquad \qquad \qquad \qquad \iff \\
				& c\cdot k^{'}\cdot\mathbb{E}[\chi_{\{\tau_{k^{'}}<T\}} \mid \mathcal{F}_{t}]  \le K(\omega) - Y^{\nu \circ (t,u),k}_{t}  \\
				&\qquad \qquad \qquad \qquad \qquad \qquad \iff \\
				&\mathbb{E}[\chi_{\{\tau_{k^{'}}<T\}} \mid  \mathcal{F}_{t}] \le \frac{ K(\omega) - Y^{\nu \circ (t,u),0}_{t}}{c\cdot k^{'}}\le \frac{2K(\omega)}{c\cdot k^{'}}
			\end{aligned}
		\end{equation}
		
		Next for $k^{'}\le \hat{k}$ we define
		\begin{equation}
			\begin{aligned}
				&Y^{\nu \circ (t,u),\hat{k},k^{'}}_{t}\\
				&:=\mathbb{E}[\int_{t}^{T} f(s,X^{\nu\circ (t,u) \circ \bigcirc_{j=0}^{\hat{k}} (\tau^{*}_{j\wedge k^{'}},u^{*}_{j\wedge k^{'}}) }_{s})ds\\
				&-\sum_{i=0}^{\hat{k}}  \ell(X^{\nu\circ (t,u) \circ \bigcirc_{j=0}^{\hat{k}} (\tau^{*}_{j},u^{*}_{j})}_{\tau_{i}^{*}},u_{i}^{*},\tau_{i}^{*}) \mid  \mathcal{F}_{t}] \\
				&\le Y^{\nu \circ (t,u),k^{'}}_{t} \le Y^{\nu \circ (t,u),\hat{k}}_{t}.
			\end{aligned}
		\end{equation}
		
		Since the truncation only change the control when $\tau_{k^{'}+1}<T$ we get
		\begin{equation}
			\begin{aligned}
				&\sup_{(t,u)\in [0,T]\times U}\mid Y^{\nu \circ (t,u),\hat{k}}_{t}-\ell(X^{\nu}_{t},u,t)-Y^{\nu \circ (t,u),k^{'}}_{t}+\ell(X^{\nu}_{t},u,t)\mid \\
				&\le \sup_{(t,u)\in [0,T]\times U}\mid Y^{\nu \circ (t,u),\hat{k}}_{t}-Y^{\nu \circ (t,u),\hat{k},k^{'}}_{t}\mid\\
				&\le \sup_{(t,u)\in [0,T]\times U}\mid\mathbb{E}[\chi_{\{\tau_{k^{'}+1}<T\}}(\int_{t}^{T} f(s,X^{\nu\circ (t,u) \circ \bigcirc_{j=0}^{\hat{k}} (\tau^{*}_{j},u^{*}_{j}) }_{s})ds\\
				&-\sum_{i=0}^{\hat{k}} \ell(X^{\nu\circ (t,u) \circ \bigcirc_{j=0}^{\hat{k}} (\tau^{*}_{j},u^{*}_{j}) }_{\tau_{i}^{*}},u_{i}^{*},\tau_{i}^{*})-\int_{t}^{T} f(s,X^{\nu\circ (t,u) \circ \bigcirc_{j=0}^{\hat{k}} (\tau^{*}_{j\wedge k^{'}},u^{*}_{j\wedge k^{'}}) }_{s})ds\\
				&+\sum_{i=0}^{\hat{k}} \ell(X^{\nu\circ (t,u) \circ \bigcirc_{j=0}^{\hat{k}} (\tau^{*}_{j\wedge k^{'}},u^{*}_{j\wedge k^{'}}) },u_{i}^{*},\tau_{i}^{*})) \mid  \mathcal{F}_{t}] \mid\\
				&\le \sup_{(t,u)\in [0,T]\times U} \mid\mathbb{E}[\chi_{\{\tau_{k^{'}+1}<T\}}(\int_{t}^{T} f(s,X^{\nu\circ (t,u) \circ \bigcirc_{j=0}^{\hat{k}}  (\tau^{*}_{j},u^{*}_{j}) }_{s})ds\\
				&-\int_{t}^{T}f(s,X^{\nu\circ (t,u) \circ \bigcirc_{j=0}^{\hat{k}} (\tau^{*}_{j\wedge k^{'}},u^{*}_{j\wedge k^{'}}) }_{s})ds \mid  \mathcal{F}_{t}] \mid\\
				&\le \sup_{(t,u)\in [0,T]\times U}\mathbb{E}[\chi_{\{\tau_{k^{'}+1}<T\}} \mid  \mathcal{F}_{t}]^{\frac{1}{q}} \cdot \\
				&\mathbb{E}[\int_{t}^{T}\mid f(s,X^{\nu\circ (t,u) \circ \bigcirc_{j=0}^{\hat{k}}(\tau^{*}_{j},u^{*}_{j}) }_{s})-f(s,X^{\nu\circ (t,u) \circ \bigcirc_{j=0}^{\hat{k}} (\tau^{*}_{j\wedge k^{'}},u^{*}_{j\wedge k^{'}}) }_{s})\mid^{p} ds\mid  \mathcal{F}_{t}]^{\frac{1}{p}}\\
				&\le \frac{C(\omega)}{(k^{'})^{\frac{1}{q}}}
			\end{aligned}
		\end{equation}
		by applying Hölder inequality and (\ref{ka}). Taking the limit in $k^{'}$ yields that the sequence is a.s. uniformly Cauchy which in turn gives the needed convergence.
	\end{proof}
\end{Lem}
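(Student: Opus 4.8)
The statement to prove is Lemma \ref{la}: that $Y^{\nu \circ (t,u),k}_{t}-\ell(X^{\nu}_{t},u,t)$ converges uniformly in $(t,u)$ as $k\to\infty$, $\mathbb{P}$-a.s. Since $\ell(X^\nu_t,u,t)$ does not depend on $k$, this is equivalent to showing that $Y^{\nu\circ(t,u),k}_t$ is a.s. uniformly Cauchy in $(t,u)$ as $k\to\infty$. The plan is to control the tail of the ``number of interventions'' by exploiting the strictly positive cost $\ell>K_6>0$: each extra allowed intervention that the optimal $k$-step strategy actually uses contributes at least a fixed positive amount to the cost sum, while the whole $k$-step value stays bounded between $Y^{\nu\circ(t,u),0}_t$ and $\mathbb{E}[\int_t^T\sup_{\hat\nu}f(s,X^{\hat\nu}_s)\,ds\mid\mathcal{F}_t]$, uniformly in $(t,u)$ by Theorem \ref{dyn} and the growth bound on $f$. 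This forces the conditional probability that the $k'$-th optimal intervention time is strictly less than $T$ to decay like $1/k'$, uniformly in $(t,u)$ off a null set.

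\textbf{Key steps, in order.} First I would write the representation \eqref{verf} for $Y^{\nu\circ(t,u),k}_t$ as the expected running reward of the associated $k$-step optimal strategy minus the sum of its intervention costs, so that adding back the conditional expected cost sum leaves only the running-reward part. Second, using the polynomial growth of $f$ together with the uniform moment bound $\sup_{\nu\in\mathcal{A}_f}\mathbb{E}[\sup_{t}\|X^{x,\nu}_t\|^q]<C$ from Theorem \ref{dyn}, plus Doob's maximal inequality to handle the supremum over the starting point $(t,u)$ pulled inside an outer conditional expectation, I would deduce that $\sup_{(t,u)}|\,Y^{\nu\circ(t,u),k}_t+\mathbb{E}[\sum_{i=0}^k\ell(X^\nu_{\tau_i^*},u_i^*,\tau_i^*)\chi_{\{\tau_i^*<T\}}\mid\mathcal{F}_t]\,|\le K(\omega)$ for an a.s. finite $\mathcal{F}_T$-measurable $K$. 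Third, since every intervention actually performed contributes at least $K_6>0$ to the cost sum, I get $K_6\,k'\,\mathbb{E}[\chi_{\{\tau_{k'}<T\}}\mid\mathcal{F}_t]\le K(\omega)-Y^{\nu\circ(t,u),0}_t\le 2K(\omega)$, hence $\mathbb{E}[\chi_{\{\tau_{k'}<T\}}\mid\mathcal{F}_t]\le 2K(\omega)/(K_6 k')$, uniformly in $(t,u)$. Fourth, comparing $Y^{\nu\circ(t,u),\hat k}_t$ with the truncated value $Y^{\nu\circ(t,u),\hat k,k'}_t$ (which freezes the strategy after the $k'$-th intervention), the two differ only on $\{\tau_{k'+1}<T\}$; the running-reward difference on this event is estimated by Hölder's inequality, splitting off $\mathbb{E}[\chi_{\{\tau_{k'+1}<T\}}\mid\mathcal{F}_t]^{1/q}\le (C(\omega)/k')^{1/q}$ and bounding the remaining $L^p$ factor by the moment bounds on $X$ and the Lipschitz/growth bounds on $f$. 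Letting $k'\to\infty$ shows the sequence is a.s. uniformly Cauchy in $(t,u)$, which gives the claimed uniform convergence.

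\textbf{Main obstacle.} The delicate point is obtaining the bound $K(\omega)$ \emph{uniformly in the starting data $(t,u)$ simultaneously for all $k$}, since one cannot simply take expectations and invoke a uniform-integrability argument that is already pointwise-in-$(t,u)$; the supremum over $(t,u)$ sits inside an outer expectation and must be absorbed. The trick is to dominate $\sup_{(t,u)}\mathbb{E}[\,\cdot\mid\mathcal{F}_t]$ by a single conditional expectation $\mathbb{E}[\sup_{s}\sup_{\hat\nu\in\mathcal{A}}|C+X^{\nu\circ\hat\nu\circ\bigcirc(\tau_j^*,u_j^*)}_s|^2\mid\mathcal{F}_T]$ and then apply Doob's inequality together with the $\nu$-uniform estimate of Theorem \ref{dyn}; making this domination rigorous (in particular the interchange of the supremum over controls with the conditional expectation, and checking the estimates of Theorem \ref{dyn} survive the extra composition $\bigcirc_{j=0}^k(\tau_j^*,u_j^*)$ of impulses appended to an arbitrary $\nu$) is the technical heart of the argument. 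Once this uniform-in-$(t,u)$ bound is in place, the decay $1/k'$ and the Hölder splitting in the final step are routine.
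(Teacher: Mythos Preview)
Your proposal is correct and follows essentially the same route as the paper: the representation \eqref{verf}, the uniform-in-$(t,u)$ bound $K(\omega)$ via Doob and Theorem~\ref{dyn}, the $1/k'$ decay of $\mathbb{E}[\chi_{\{\tau_{k'}<T\}}\mid\mathcal{F}_t]$ from $\ell>K_6$, the truncated comparison strategy $Y^{\nu\circ(t,u),\hat k,k'}_t$, and the final H\"older splitting are exactly the steps the paper carries out, in the same order. Your identification of the uniform-in-$(t,u)$ a.s.\ bound as the main obstacle is also spot on.
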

Recall Definition \ref{lo} of a verification family, we are now in position to establish that such a family indeed exists.
\begin{proof}[Proof of Theorem \ref{Thm:neat}]
	Recall once more that
	\begin{equation}
		Y^{\nu,k}_{t}\le Y^{\nu,k+1}_{t}
	\end{equation}
	In order to apply Theorem \ref{Snell} (iii) to obtain continuity and (i) for its pointwise limit, we need to establish that what is inside the Snell envelope is of class [D] and that $\sup_{u\in U}\{\tilde{Y}^{\nu \circ (s,u)}_{s}-\ell( X,u,s)\}$ is a.s. continuous and adapted, which is (ii).
	Starting with the latter we have, for any $s$,
	\begin{equation}\label{lal}
		\begin{aligned}
			\lim_{k}\sup_{u\in U}\{Y^{\nu \circ (s,u),k}_{s}-\ell( X^{\nu },u,s)\}=\sup_{u\in U}\{\tilde{Y}^{\nu \circ (s,u)}_{s}-\ell( X^{\nu },u,s)\}
		\end{aligned}
	\end{equation}
	due to Lemma \ref{la}.
	
	Adaptedness follows since the above is a limit of processes that are adapted by Lemma \ref{jcts} and continuity follows by using Lemma \ref{la} again as well as Lemma \ref{nini}.
	
	That $\tilde{Y}^{\nu}_{t}$ and what we take the Snell envelope of satisfy the bound for  $\mathcal{S}^{p}_{c}$  follows, again, by the same reasoning as at the end of the proof of Lemma \ref{jcts}. And thus the latter also belong to class [D].
	
	We conclude that (ii) holds and as a consequence we obtain (i) and continuity.

	The last property is obtained by the fact that upper semi-continuity is preserved under uniform convergence.
\end{proof}	

\section{Applications to SDDEs, Markovian case and a numerical example}
\subsection{SDDEs and the Markovian case}
In this section we provide an application of our main result. In particular, we consider the special case where the state dynamic, in addition to the value of the state, also depends on the state translated backward with a fixed delay. Note that such systems are necessarily non-Markovian. There is a wide variety of systems in which some time is required for the control to reach it's full effect on the state and hence are subject to such delays. Work on these types of problems has been carried out in e.g \cite{d},\cite{dd}, \cite{ddd} and \cite{dddd}.

A textbook example is the well known delayed feedback which is usually illustrated by the following diagram
\begin{figure}[h!]
	\centering
	\includegraphics[width=0.75\textwidth]{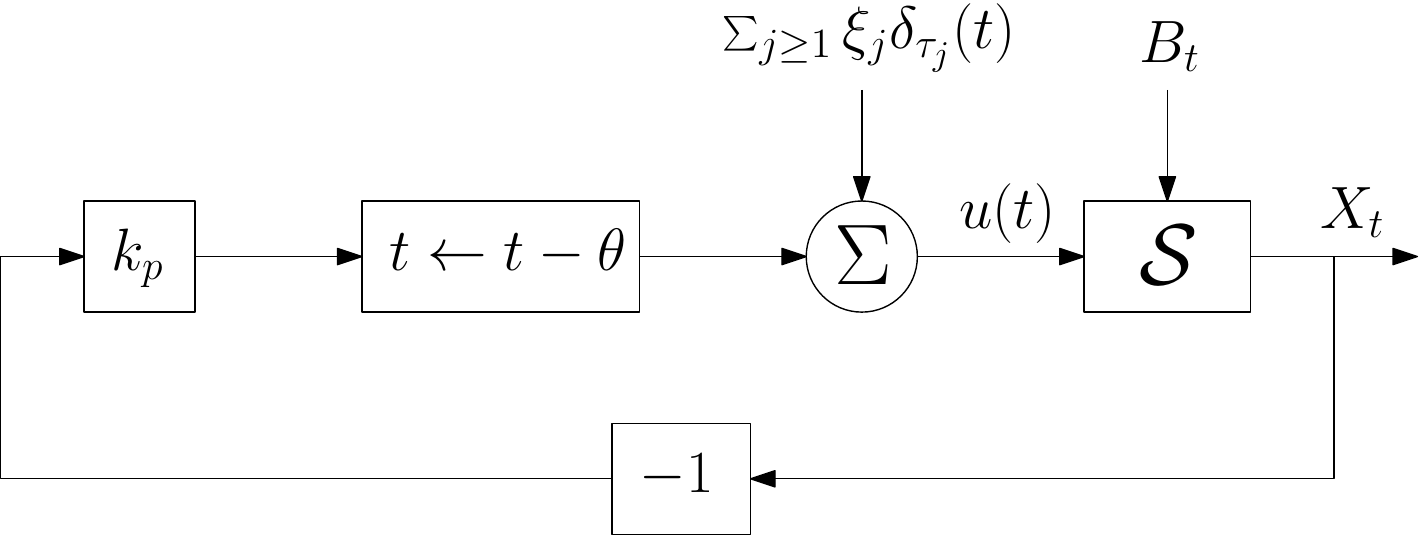}
	\caption{System used in the numerical example.}
	\label{fig:system}
\end{figure}

In other words, we add an impulse control to a proportional feedback system with delay $\theta$ in the continuous control actuation.

With this in mind, similar to above, we introduce,

\begin{Ass} 
	$(i)\,a:[0,T] \times \mathbb{R}^d \times \mathbb{R}^d \rightarrow \mathbb{R}^d$ and 
	$b:[0,T] \times \mathbb{R}^d \times \mathbb{R}^d \rightarrow \mathbb{R}^{d\times d}$ where  $a(t,0,0)$ and $b(t,0,0)$ are continuous in $t$ and the components satisfy 
	\begin{equation}
		\mid a_{i}(t,x,y)-a_{i}(t,\hat{x},\hat{y})\mid  \le K_{1}(\mid x-\hat{x}\mid+\mid y-\hat{y}\mid),
	\end{equation}

	\begin{equation}
		\mid b_{i,j}(t,x,y)-b_{i,j}(t,\hat{x},\hat{y})\mid  \le K_{2}(\mid x-\hat{x}\mid+\mid y-\hat{y}\mid),
	\end{equation}
	
	$K_{1},K_{2}$ being constants.
	
	$(ii)\,\Gamma:\mathbb{R}^n \times U \rightarrow  \mathbb{R}^n$ satisfy
	\begin{equation}
		\norm{\Gamma(x,u)}\le C \vee \norm{x} \, \text{and} \, 
	\end{equation}
	\begin{equation}
		\norm{\Gamma(x,u)-\Gamma(y,v)} \le \norm{(x,u)-(y,v)} \, \text{for all u,v}  \in U \text{and} \, x,y \in \mathbb{R}^n
	\end{equation}
\end{Ass}
and set
\begin{equation}
	\begin{aligned}
		&dX^{\alpha,\nu,0}_{t}=\alpha(0)+a(t,X^{\alpha,\nu,0}_{t},X^{\alpha,\nu,0}_{t-\delta})dt+b(t,X^{\alpha,\nu,0}_{t},X^{\alpha,\nu,0}_{t-\delta})dB_{t} \quad 0 \le t \le T \\
		&dX^{\alpha,\nu,0}_{t}=\alpha(t), \,  t\in [-\delta,0]
	\end{aligned}
\end{equation}
given some uniformly bounded function $\alpha \in \mathcal{D}$. Then, recursively define
\begin{equation}
	\begin{aligned}
		&dX^{\alpha,\nu,j}_{t}=a(t,X^{\alpha,\nu,j}_{t},X^{\alpha,\nu,j}_{t-\delta})dt+b(t,X^{\alpha,\nu,j}_{t},X^{\alpha,\nu,j}_{t-\delta})dB_{t} \quad \tau_{j}< t \le T\\
		&X^{\alpha,\nu,j}_{\tau_{j}}=\Gamma(X^{\alpha,\nu,j-1}_{\tau_{j}},\xi_{j}) \\
		&X^{\alpha,\nu,j}_{t}=X^{\alpha,\nu,j-1}_{t} \quad 0 \le t  < \tau_{j}.
	\end{aligned}
\end{equation}
Finally, to obtain our controlled state we put $\limsup_{j\rightarrow \infty} X^{\alpha,\nu,j}=X^{\alpha,\nu}$.

These constrains are clearly stronger than the ones in Assumptions \ref{A1} and hence by our above result we have existence and characterisation of an optimal control when the underlying dynamics depends on a delayed state.

By letting the delay $\delta=0$, we obtain the required assumptions in the Markovian setting. While the above condition on the dynamics are slightly stronger than those of \cite{tysk}, they cover the most typical and easily verified conditions. Moreover, we do not need to make any assumptions on the optimal control.
\subsection{A numerical example}
The several distinct frameworks in which we can consider impulse problems give us different options on how to numerically compute the optimal control. Doing so for non-Markovian problems is notoriously difficult, due to the high dimension of the state space. In recent years, the latter has attracted considerable attention due to its importance in machine learning and artificial intelligence. Below, we incorporate a recently proposed method using deep neural networks to make a Markov approximation of a non-Markovian system more tractable.

Let the system $\mathcal S$ above be given by
\begin{align}
	X_t=X_0+\int_0^t(a X_r+bu(r))dr+W_t,
\end{align}
leading to the following impulsively controlled SDDE representation
\begin{align}
	X_t=X_0+\int_0^t(a X_r-k_pX_{r-\theta})dr+W_t+\sum_{j\geq 1}\chi_{[\tau_j\leq t]}\beta_j.
\end{align}
Our aim is to find an impulse control that minimizes
\begin{align}
	J(u):=\mathbb{E}\Big[X_1^2+\int_0^1 X^2_r dr+0.1\sum_{j\geq 1}(1+\beta_j^2)\Big],
\end{align}
when $X_s=0$ for $s\in [-\theta,0]$, $k_p=a=1$, $\theta=0.05$ and $U=[-2,2]$.

To obtain numerical approximations of stochastic systems with delays one can, as mentioned, apply a particular time-discretization of the problem (for convergence properties see e.g. \cite{KushnerMCdelay}). This renders a finite dimensional model of the system. In particular, a discretization step $\Delta t$ (we assume that $\theta$ is a multiple of $\Delta t$) gives us a state-space dimension of $\theta/\Delta t+1$. The resulting Markovian discrete-time problem can then be solved by standard methods. 

Due to the potentially high dimension of the state space, we have resorted to a dynamic programming  approach based on value function approximation by neural networks recently proposed in~\cite{HureDeepNN}, in particular the one called Hybrid-Now. Note that even if our main result establishes one of the sufficient conditions in this approximation, one has to make sure that the $\argmin$ in the neural network approximation can be computed.

Solving the problem with $\Delta t=0.01$ we get the sample trajectories in Figure~\ref{fig:samp-traj}.
\begin{figure}[b]
	\centering
	\includegraphics[width=0.75\textwidth]{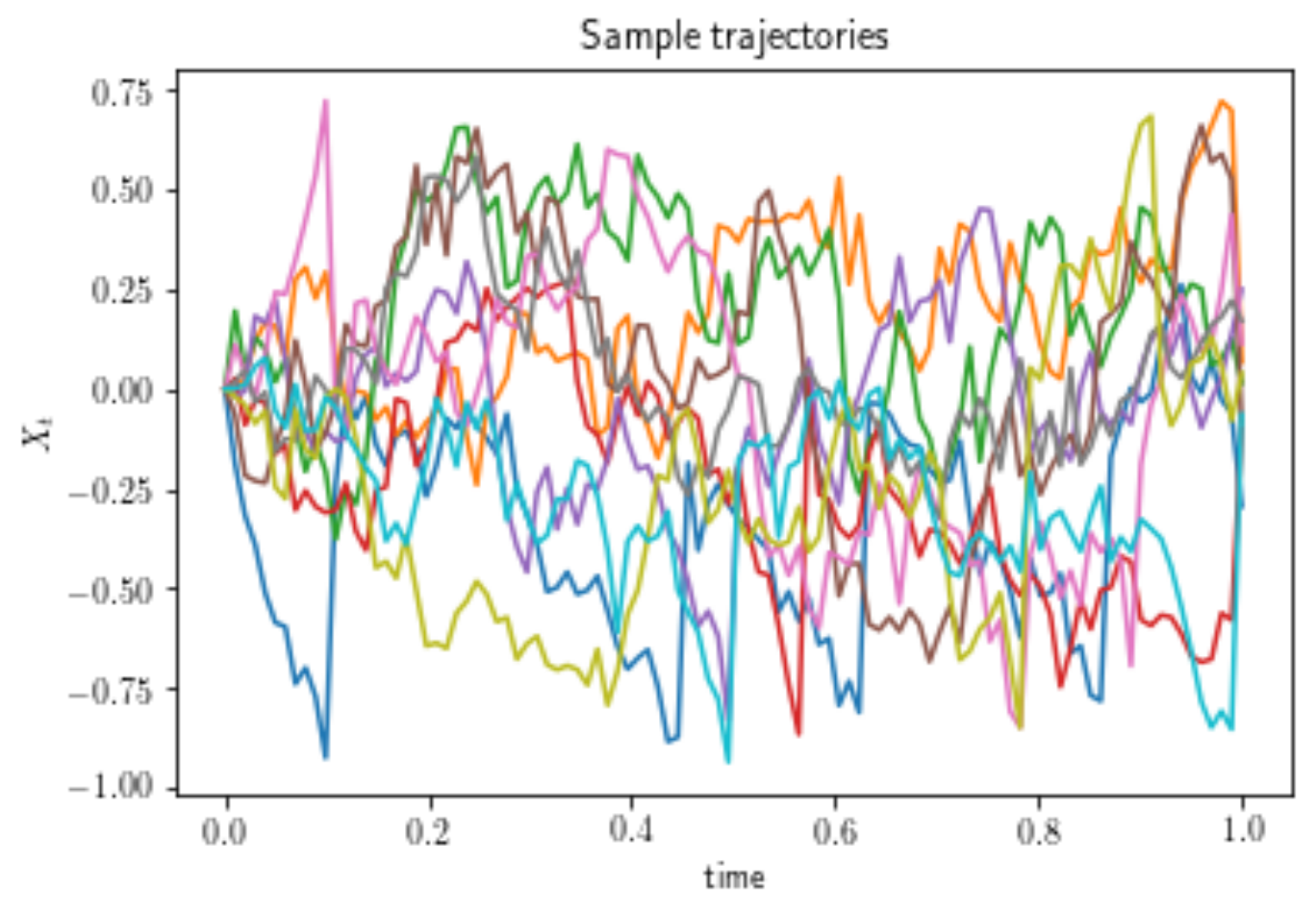}
	\caption{Sample trajectories of the optimally controlled process.}
	\label{fig:samp-traj}
\end{figure}
The value function is plotted in Figure~\ref{fig:vf}.
\begin{figure}[b]
	\centering
	\includegraphics[width=0.75\textwidth]{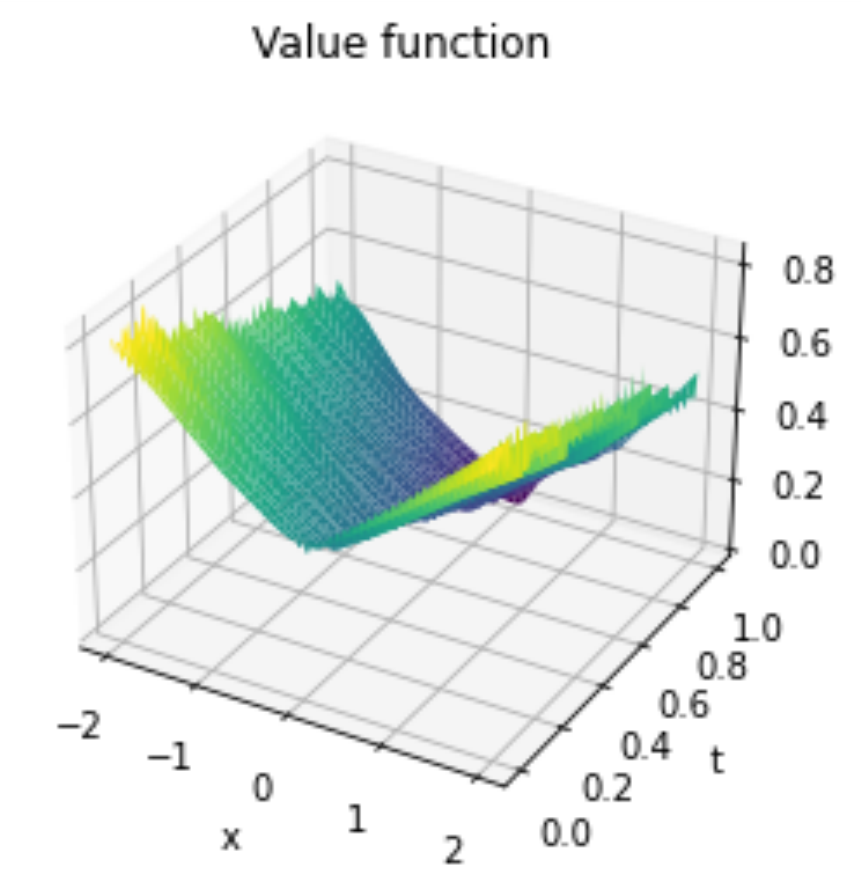}
	\caption{Value function in $X_s=x$ for $s\in [t-\theta,t]$.}
	\label{fig:vf}
\end{figure}
An optimal control is plotted in Figure~\ref{fig:oc}.
\begin{figure}[b]
	\centering
	\includegraphics[width=0.75\textwidth]{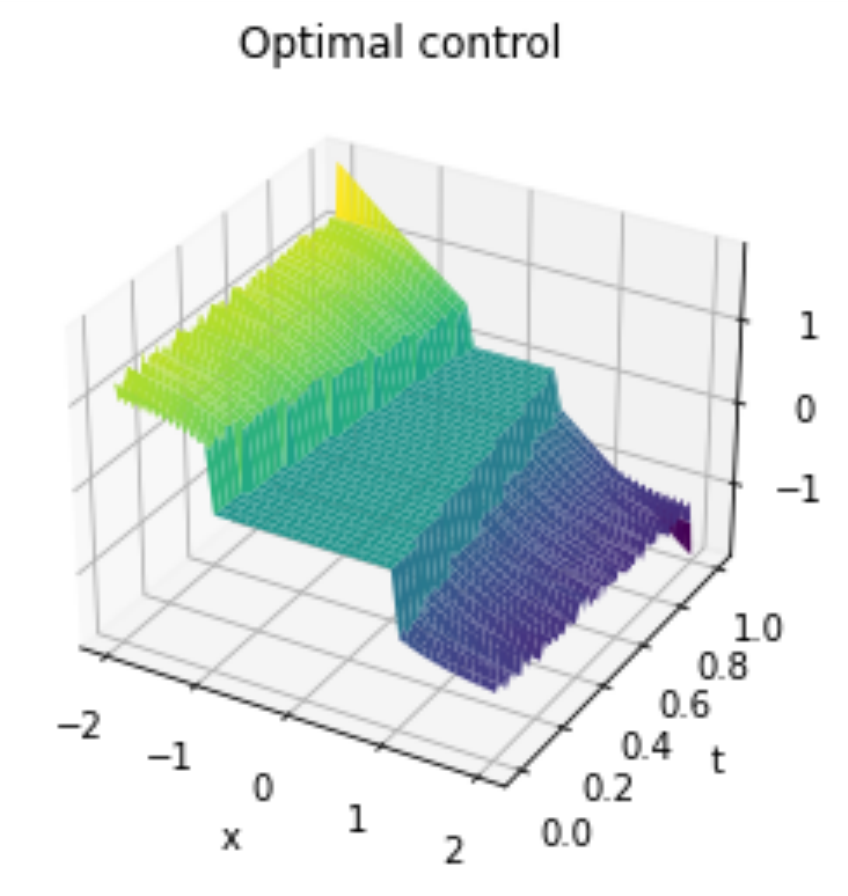}
	\caption{Optimal control in $X_s=x$ for $s\in [t-\theta,t]$.}
	\label{fig:oc}
\end{figure}
\section*{Acknowledgments}
This work was supported by the Swedish Energy Agency through grant number 42982-1.
\bibliographystyle{unsrt}
\bibliography{siam_ref}
\end{document}